\newtheorem{theorem}{Théorème}[section]
\newtheorem{lemma}[theorem]{Lemme}
\newtheorem{proposition}{Proposition}[section]
\theoremstyle{definition}
\newtheorem{definition}{Définition}[section]
\theoremstyle{remark}
\newtheorem*{remark}{Remarque}
\newcommand{\inv}{^{-1}}
\newcommand{\bb}[1]{\mathbb{#1}}
\newcommand{\atan}{\mathrm{atan}}
\newcommand{\s}[1]{\mathcal{S}^{#1}}
\newcommand{\eqCP}{\sim_{CP}}
\newcommand{\eqCM}{\sim_{CM}}
\newcommand{\eqCT}{\sim_{CT}}
\newcommand{\eqCol}{\sim_{col}}
\newcommand{\eqSym}{\sim_{sym}}
\title{\textbf{Polygones fondamentaux et plongements dans l'espace euclidien : ruban de Möbius, tore, et plan projectif}}
\author{Anthony Fraga}
\date{}
\begin{document}

\maketitle
\begin{abstract}
Dans le cadre de la topologie algébrique, il est courant de représenter des surfaces par des polygones fondamentaux. Cette représentation est intrinsèque, mais nécessite d'identifier des points par une relation d'équivalence. Au contraire, un plongement dans un espace euclidien n'est pas intrinsèque, mais ne demande pas d'égaliser des points. Dans cet article, nous étudierons le ruban de Möbius, le tore de dimension 2, et le plan projectif réel. Plus précisément, on construit des homéomorphismes explicites, ainsi que leurs réciproques, entre les polygones fondamentaux et des surfaces de l'espace euclidien : de dimension 3 pour le ruban de Möbius et le tore, et 4 pour le plan projectif. Si les trois plongements sont bien connus, nous n'avons pas connaissance de formules explicites pour leur réciproques.
\end{abstract}

\section{Préliminaires}

Dans le cadre de la rigueur du présent article, et par la forte présence d'espaces quotient, il semble nécessaire de présenter quelques résultats fondamentaux sur ceux-ci.

\subsection{Propriété universelle du quotient topologique}

Étant donné un ensemble $X$ muni d'une relation d'équivalence $\sim$, on note pour $x\in X$ sa \emph{classe d'équivalence} $[x]=\{x'\in X, x\sim x'\}$. L'ensemble des classes d'équivalences, appelé \emph{ensemble quotient}, se note $X/\!\sim\ =\{[x],x\in X\}$. On note $\pi:X\to X/\!\sim$ la \emph{surjection canonique} induite par $X$ et $\sim$.

\begin{definition}
Soit $X$ un espace topologique, muni d'une relation d'équivalence $\sim$. On appelle \emph{topologie quotient} sur $X/\!\sim$, induite par $X$ et $\sim$, la topologie définie par :\[\forall\, U\!\subset\! X/\sim,\qquad U\ ouvert\Leftrightarrow\pi\inv(U)\ ouvert\ dans \ X.\]
\end{definition}

\begin{theorem}[Propriété universelle de l'espace quotient]\label{th:quotient}
Soient $X,Y$ deux espaces topologiques, et~$\sim$ une relation d'équivalence sur $X$. Pour $f:X\to Y$ une application continue vérifiant $x\sim x'\Rightarrow f(x)=f(x')$, il existe une unique application continue $\overline{f}:X/\!\sim\ \longrightarrow Y$, telle que $f=\overline{f}\circ \pi$.

De plus, si $f$ vérifie, pour tout $x,x'$ éléments de $X$, $f(x)=f(x')\Rightarrow x\sim x'$, alors $\overline{f}$ est injective.
\end{theorem}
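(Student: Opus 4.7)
Le plan est de définir $\overline{f}$ point par point à partir de $f$, puis de vérifier successivement la bonne définition, l'unicité, la continuité, et enfin l'injectivité sous l'hypothèse supplémentaire.

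Concrètement, je poserais $\overline{f}([x])=f(x)$ pour tout $[x]\in X/\!\sim$. La bonne définition repose précisément sur l'hypothèse $x\sim x'\Rightarrow f(x)=f(x')$ : si $[x]=[x']$, alors $x\sim x'$, donc $f(x)=f(x')$, et la valeur choisie pour $\overline{f}([x])$ ne dépend pas du représentant. L'identité $f=\overline{f}\circ\pi$ est alors immédiate par construction. Pour l'unicité, si $g:X/\!\sim\ \to Y$ vérifie $g\circ\pi=f$, alors pour tout $x\in X$, on a $g([x])=g(\pi(x))=f(x)=\overline{f}([x])$, et comme $\pi$ est surjective, $g=\overline{f}$.

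L'étape cruciale — et la seule qui mobilise vraiment la topologie quotient — est la continuité de $\overline{f}$. Soit $V$ un ouvert de $Y$ ; je veux montrer que $\overline{f}\inv(V)$ est ouvert dans $X/\!\sim$. Par la définition même de la topologie quotient, cela équivaut à montrer que $\pi\inv\bigl(\overline{f}\inv(V)\bigr)$ est ouvert dans $X$. Or
\[
\pi\inv\bigl(\overline{f}\inv(V)\bigr)=(\overline{f}\circ\pi)\inv(V)=f\inv(V),
\]
qui est bien ouvert par continuité de $f$. C'est l'argument typique qui illustre pourquoi la topologie quotient est ainsi définie : elle est taillée pour que ce type de raisonnement fonctionne automatiquement.

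Enfin, pour la partie supplémentaire d'injectivité, je prendrais $[x],[x']\in X/\!\sim$ avec $\overline{f}([x])=\overline{f}([x'])$, ce qui s'écrit $f(x)=f(x')$ ; l'hypothèse additionnelle donne alors $x\sim x'$, soit $[x]=[x']$. Je ne prévois pas d'obstacle sérieux : la difficulté conceptuelle réside uniquement dans le maniement correct de la définition de la topologie quotient à l'étape de continuité, le reste étant essentiellement une vérification algébrique sur les classes.
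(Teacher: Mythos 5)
Votre démonstration est correcte et suit exactement la même démarche que celle de l'article : définition de $\overline{f}$ sur les classes, vérification de la bonne définition via l'hypothèse de compatibilité, continuité par l'identité $\pi\inv\bigl(\overline{f}\inv(V)\bigr)=f\inv(V)$, unicité par surjectivité de $\pi$, et injectivité sous l'hypothèse supplémentaire. Rien à redire.
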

\begin{proof}
En reprenant les notations de l'énoncé, il nous faut montrer que l'application $\overline{f}$ qui à $[x]$ renvoie $f(x)$, pour $x\in X$, est bien définie. Une telle application vérifie $f=\overline{f}\circ \pi$, car nous avons : \[\forall x\in X,\quad \overline{f}\big(\pi(x)\big)=\overline{f}([x])=f(x).\]

Soient $x,x'\in X$ tel que $x\sim x'$. Cela implique que $f(x)=f(x')$. Il s'ensuit alors l'égalité suivante : \[
\overline{f}([x])=f(x)=f(x')=\overline{f}([x']).\]Ainsi, $\overline{f}$ ne dépend pas du représentant, mais uniquement de la classe. Cela nous permet donc de conclure que $\overline{f}$ est bien définie. 

\bigskip Montrons que cette application est continue. Soit $V$ un ouvert de $Y$, nous voulons montrer que $\overline{f}\inv(V)$ est un ouvert de $X/\sim$. Or, par définition de l'espace quotient, cela revient à vérifier que $\pi\inv\left(\overline{f}\inv(V)\right)$ est un ouvert de $X$. En remarquant l'égalité suivante, \[\pi\inv\left(\overline{f}\inv(V)\right)=\left(\overline{f}\circ\pi\right)\inv(V)=f\inv(V),\]nous pouvons, par continuité de $f$, conclure que $\overline{f}\inv(V)$ est bien un ouvert de~$X/\sim$, ce qui fait de $\overline{f}$ une application continue.

\bigskip Montrons désormais l'unicité d'une telle application. Considérons $g$ une application vérifiant également $f=g\circ\pi$. Montrons que $g=\overline{f}$.

Soit $\alpha\in X/\!\sim$, et $x\in\pi^{-1}(\alpha)$. Nous avons l'égalité : \[g(\alpha)=g\big(\pi(x)\big)=f(x)=\overline{f}\big(\pi(x)\big)=\overline{f}(\alpha),\]ce qui nous permet de déduire que $g=\overline{f}$. On en conclut alors que $\overline{f}$ est unique.

\bigskip Enfin, supposons que l'application $f$ vérifie, pour tout $x,x'$ éléments de $X$, l'implication $f(x)=f(x')\Rightarrow x\sim x'$. Montrons alors que l'application $\overline{f}$ est injective.

Soient $[x],[x']\in X/\sim$. On a les implications suivantes : \[\overline{f}([x])=\overline{f}([x'])\Longrightarrow f(x)=f(x')\Longrightarrow x\sim x'\Longrightarrow [x]=[x'],\]ce qui nous permet de conclure que $\overline{f}$ est injective.
\end{proof}

\begin{definition}
En reprenant les notations, une telle application $f$ est dite \emph{compatible} avec la relation d'équivalence $\sim$.
\end{definition}

\subsection{Stratégie pour construire les homéomorphismes}

Comme annoncé dans le résumé, on souhaite relier entre elles les caractérisations du ruban de Möbius, du tore et du plan projectif réel en termes de polygones fondamentaux à leurs plongements dans l'espace euclidien. Formellement, cela revient à construire des homéomorphismes.

\begin{definition}
Soient $X,Y$ deux espaces. Un \emph{homéomorphisme} entre $X$ et $Y$ est une application $f$ bijective et continue, telle que son inverse $f\inv$ soit également continue. On dit dans ce cas que les espaces $X$ et $Y$ sont \emph{homéomorphes}.
\end{definition}

La propriété universelle énoncée dans la section précédente nous permet de construire une application quotient injective et continue (lorsque son application de base est définie comme il se doit). Par la suite, nous montrerons que cette application admet également une inverse à droite. Avec la proposition qui suit, nous en déduisons que notre application est bijective, et d'inverse son inverse à droite. Il ne nous restera plus qu'à montrer la continuité de l'application inverse, pour en conclure que notre application est bien un homéomorphisme.

\begin{proposition}\label{prop:inj+inv=bij}
Une application injective et inversible à droite est bijective, et d'inverse son inverse à droite.
\end{proposition}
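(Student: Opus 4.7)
Mon plan consiste à nommer les objets, puis à enchaîner deux implications simples : la surjectivité à partir de l'inverse à droite, puis l'égalité $g = f\inv$ à partir de l'injectivité.

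D'abord, je poserais $f:X\to Y$ injective et $g:Y\to X$ une inverse à droite, c'est-à-dire vérifiant $f\circ g = \mathrm{id}_Y$. L'étape immédiate est d'observer la surjectivité de $f$ : pour $y\in Y$ arbitraire, l'égalité $f(g(y))=y$ fournit un antécédent explicite, à savoir $g(y)$. Combinée à l'hypothèse d'injectivité, cela donne la bijectivité de $f$, ce qui justifie l'existence de $f\inv$.

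Il reste à identifier $f\inv$ avec $g$. Puisque $f$ est déjà bijective, il suffit de vérifier la relation manquante $g\circ f = \mathrm{id}_X$. Pour $x\in X$, je partirais de l'égalité $f\bigl(g(f(x))\bigr) = (f\circ g)(f(x)) = f(x)$, obtenue par associativité et par l'hypothèse sur $g$. L'injectivité de $f$ permet alors de simplifier à gauche et de conclure $g(f(x))=x$, donc $g\circ f = \mathrm{id}_X$. Ainsi $g$ est également inverse à gauche, ce qui force $g=f\inv$.

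Aucune étape ne me paraît délicate : il s'agit d'une manipulation purement ensembliste, sans topologie ni structure supplémentaire. Le seul point où il faut être attentif est précisément l'usage explicite de l'injectivité pour passer de $f(g(f(x)))=f(x)$ à $g(f(x))=x$, car c'est cette étape qui distingue la situation d'une simple inverse à droite (qui existe sous l'axiome du choix pour toute surjection) d'une vraie inverse.
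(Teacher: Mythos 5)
Votre démonstration est correcte et suit essentiellement la même démarche que l'article : la surjectivité de $f$ est obtenue par le même argument (pour tout $y$, $g(y)$ est un antécédent explicite de $y$ puisque $f(g(y))=y$), puis on identifie $g$ à l'inverse de $f$. La seule différence est cosmétique : vous établissez directement $g\circ f=\mathrm{id}_X$ en simplifiant $f\bigl(g(f(x))\bigr)=f(x)$ par injectivité de $f$, tandis que l'article considère un inverse bilatère $h$ de $f$ et écrit $h=h\circ f\circ g=g$ ; les deux arguments sont également valables et de même portée.
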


\begin{proof}
Soit $f:X\to Y$ une application injective, et soit $g:Y\to X$ une inverse à droite de celle-ci. Pour montrer que l'application $f$ est bijective, il nous suffit alors de montrer qu'elle est surjective.

\bigskip Pour $y\in Y$, on a $f(g(y))=f\circ g(y)=y$, ce qui veut dire que $g(y)$ est un antécédent de $y$. On en conclut dès lors que $f$ est une application surjective.

\bigskip Montrons désormais l'unicité de l'inverse, montrant alors que $g$ est bien aussi inverse à gauche. Soit $h$ une autre inverse de $f$. Comme $h$ est en particulier une inverse à gauche et $g$ une inverse à droite, on a $h=h\circ f\circ g=g$. On en conclut alors que $h=g$.
\end{proof}

\subsection{L'application atan2}

Certaines de nos surfaces dans l'espace euclidien sont paramétrés avec des angles polaires. Pour pouvoir construire une réciproque à cette paramétrisation, nous devons utiliser l'application $\atan2$.

L'application \emph{atan2} est une variante de la fonction $\arctan$, qui prend cette fois deux arguments. Elle permet de redonner l'angle polaire à partir des coordonnées cartésiennes, alors que la fonction $\arctan$ identifie les angles diamétralement opposés.

Cette fonction est définie de la manière suivante : \[\atan2(y,x)=2\arctan\left(\frac{y}{\sqrt{x^2+y^2}+x}\right).\]De manière équivalente, nous avons également la définition suivante :\begin{equation}\label{eq:atan2}
\begin{split}
\forall x\neq 0&\qquad \atan2(0,x)=\left\{\begin{matrix}
0&\ si\ x>0\\ 
\pi &\ si\  x<0
\end{matrix}\right.\\
\forall y\neq 0&\qquad \atan2(y,x)=\left\{\begin{matrix}
\arctan(\frac{y}{x})&\text{si}\ x>0\\ 
\varepsilon(y)\frac{\pi}{2}&\text{si}\ x=0\\
\arctan\left(\frac{y}{x}\right) +\varepsilon(y) \pi&\text{si}\  x<0,
\end{matrix}\right.
\end{split}
\end{equation}avec $\varepsilon:y\mapsto\pm1$ qui renvoie le signe de $y$.

\begin{figure}[H]
    \centering
    \includegraphics[width=0.55\linewidth]{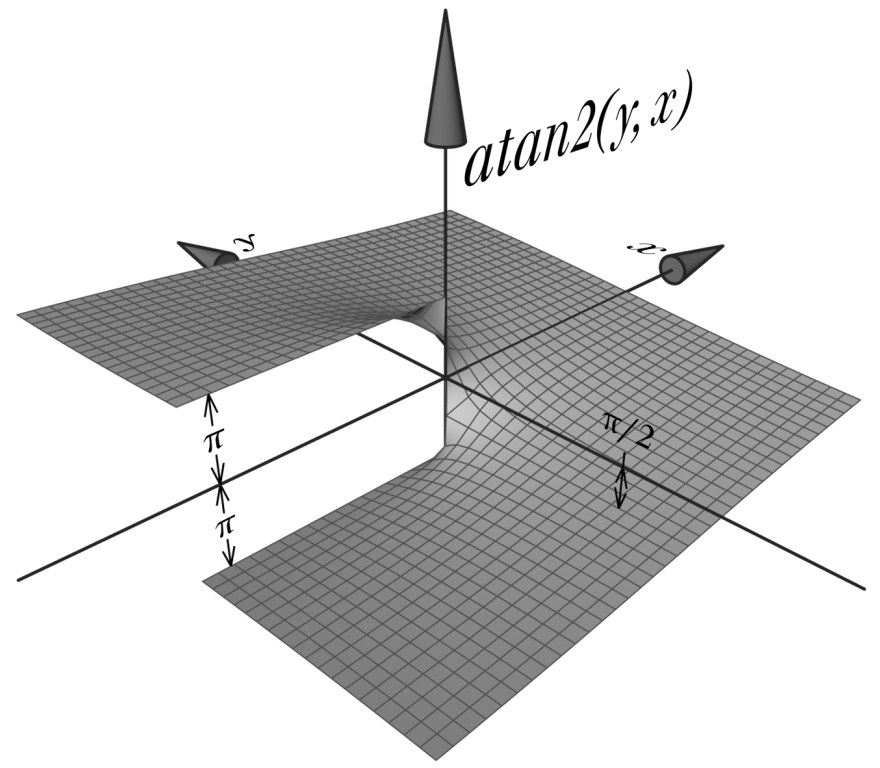}
    \caption{Graphe de l'application application $atan2$}
    \label{fig:graph-atan2}
\end{figure}
Il semble important de remarquer, d'une part avec le graphique, et d'autre part avec la définition, que l'application $\atan2$ n'est pas continue sur le segment~$[-\pi,0[\times\{0\}$. En effet, la limite lorsque $x<0$ et $y<0$ tendent vers 0 est~$-\pi$, alors que la valeur donnée $y=0,x<0$ est $\pi$.

\bigskip Nous pouvons désormais commencer à nous intéresser à nos espaces topologiques.

\newpage

\section{Le ruban de Möbius}
\subsection{Polygone fondamental}

Le premier espace qui nous intéresse est celui qui est appelé \emph{ruban de Möbius}, portant le nom de son inventeur. C'est un complexe cellulaire qui est obtenu par la figure suivante, auquel on identifie un côté par son opposé, dans le sens inverse.

\begin{figure}[H]
    \centering
    \includegraphics[width=0.25\linewidth]{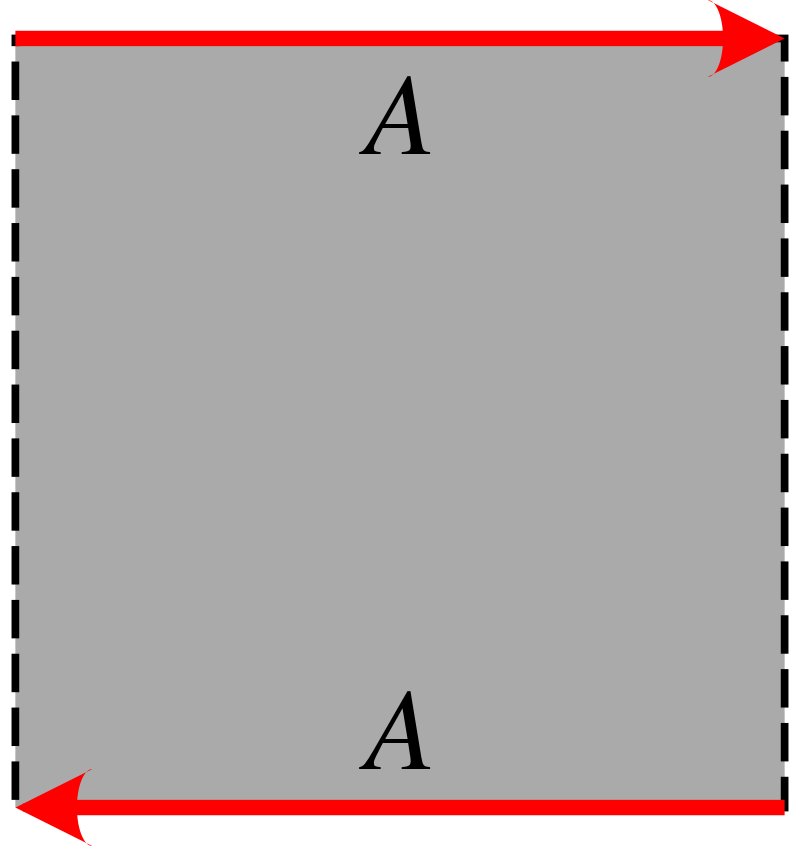}
    \caption{Polygone fondamental du ruban de Möbius}
    \label{fig:mobius-square}
\end{figure}

\begin{definition}
On considère le carré $[-1,1]^2$, ainsi que la relation d'équivalence $\eqCM$ définie par : \[\forall(t,v),(t',v')\in[-1,1]^2,\quad (t,v)\eqCM(t',v')\Leftrightarrow \left\{\begin{matrix}
(t',v')=\pm(t,v)& \text{si }v=\pm1\\
(t',v')=(t,v) & \text{sinon.}
\end{matrix}\right.\]On définit ainsi l'espace quotient $C_M:=[-1,1]^2/\eqCM$.
\end{definition}

\subsection{Plongement dans l'espace euclidien}

Ce complexe cellulaire peut être plongé dans l'espace euclidien $\mathbb{R}^3$, par la définition qui suit. Intuitivement, nous pouvons imaginer partir d'une feuille de papier, pour recoller deux côtés opposés, dans le sens suivant les flèches du polygone fondamental. On obtient alors une bande recollée avec une torsion d'un demi-tour, ce qui donne la figure ci-dessous.

\begin{figure}[H]
    \centering
    \includegraphics[width=0.35\linewidth]{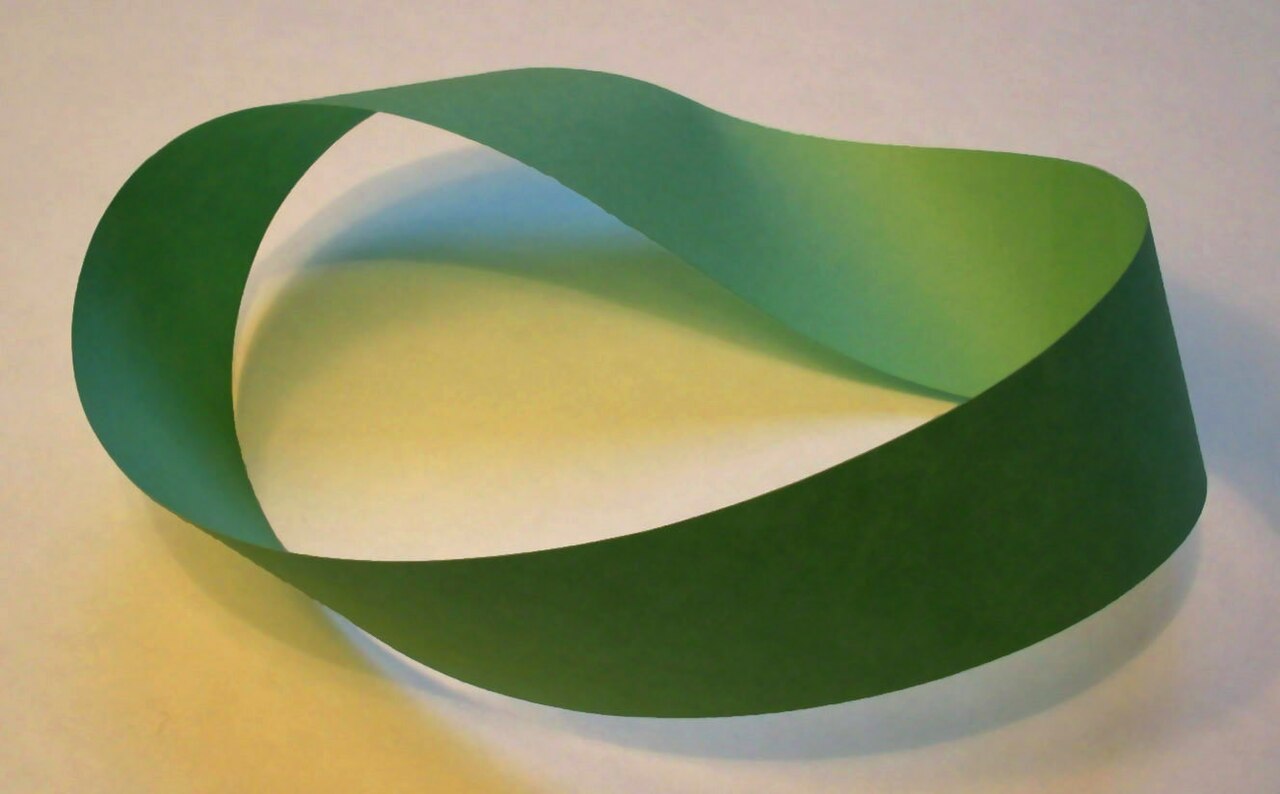}
    \caption{Ruban de Möbius obtenu à partir d'une feuille}
    \label{fig:mobius-strip}
\end{figure}

\begin{definition}\label{def:mobius-R3}
On considère la surface paramétrée suivante dans $\bb{R}^3$ : \begin{equation}\label{eq:mobius}
\left\{\begin{matrix}
x(t,v)&=&\left(1+\frac{t}{2}\cos\frac{v\pi}{2} \right )\cos (v\pi)\\ 
y(t,v)&=&\left(1+\frac{t}{2}\cos\frac{v\pi}{2} \right )\sin (v\pi)\\ 
z(t,v)&=&\frac{t}{2}\sin\frac{v\pi}{2},
\end{matrix}\right.
\end{equation}
avec $v\in[-1,1],t\in[-1,1]$.

Nous noterons l'application $m:(t,v)\mapsto\big((x(t,v),y(t,v),z(t,v)\big)$ permettant la paramétrisation, ainsi que la surface sur $\bb{R}^3$ contenant tout les points de la paramétrisation $M=im(m)=\{m(t,v),t\in[-1,1],v\in[-1,1]\}$.

Cette surface admet comme équation cartésienne la suivante : \begin{equation}\label{eq:mobius-cartesien}
y(x^2+y^2+z^2-1)-2z(x^2+y^2+x)=0.
\end{equation}
\end{definition}

\begin{remark}
Le paramètre $t$ permet de se déplacer sur la largeur du ruban. Le paramètre $v$ permet le déplacement sur la longueur du ruban.
\end{remark}

\begin{proposition}\label{prop:mobius-z-value}
Soit $(x,y,z)\in M$, avec $y\neq0$. On peut écrire $z$ en fonction des deux autres variables, de la manière suivante : $$z=\frac{(x^2+y^2+x)\pm(x+1)\sqrt{x^2+y^2}}{y}.$$
\end{proposition}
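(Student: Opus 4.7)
Mon plan est le suivant : comme l'équation cartésienne \eqref{eq:mobius-cartesien} est polynomiale en $z$ de degré $2$, je vais la lire comme une équation du second degré en $z$ à coefficients dans $\mathbb{R}[x,y]$, puis appliquer la formule du discriminant. Précisément, je développerais :
\[
y(x^2+y^2+z^2-1)-2z(x^2+y^2+x) \;=\; y\,z^2 \,-\, 2(x^2+y^2+x)\,z \,+\, y(x^2+y^2-1).
\]
L'hypothèse $y\neq 0$ est essentielle ici, car elle assure que l'on a bien un trinôme (le coefficient dominant $y$ est non nul), ce qui autorise l'emploi de la formule quadratique et justifie aussi la division par $y$ qui apparaît dans l'énoncé.

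La formule du discriminant réduit donne alors, pour $(x,y,z)\in M$ avec $y\neq 0$,
\[
z \;=\; \frac{(x^2+y^2+x)\pm\sqrt{\Delta}}{y}, \qquad \Delta := (x^2+y^2+x)^2 - y^2(x^2+y^2-1).
\]
L'étape essentielle, et le seul vrai calcul, est de montrer la factorisation $\Delta = (x+1)^2(x^2+y^2)$. C'est là que je m'attends au principal effort calculatoire (sans surprise technique). Je poserais $r^2=x^2+y^2$ et j'utiliserais $y^2=r^2-x^2$ pour obtenir
\[
\Delta = (r^2+x)^2 - (r^2-x^2)(r^2-1) = r^2\bigl(x^2+2x+1\bigr) = (x+1)^2\,r^2,
\]
ce qui donne $\sqrt{\Delta}=|x+1|\sqrt{x^2+y^2}$. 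Comme le signe $\pm$ devant la racine absorbe la valeur absolue, on retrouve bien la forme annoncée $\pm(x+1)\sqrt{x^2+y^2}$. On conclut en reportant dans l'expression de $z$.

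Je noterais enfin, bien que ce ne soit pas demandé par l'énoncé, que les deux signes $\pm$ correspondent aux deux points de $M$ situés dans la fibre au-dessus de $(x,y)$ par la projection $(x,y,z)\mapsto(x,y)$ ; lorsque $x=-1$, les deux racines coïncident, traduisant un contact du ruban avec la verticale en ce point. Ce n'est pas nécessaire à la preuve, mais pourrait éclairer l'usage ultérieur de cette formule dans la construction de la réciproque de $m$.
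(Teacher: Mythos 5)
Votre démonstration est correcte et suit essentiellement la même démarche que celle de l'article : lecture de l'équation cartésienne comme trinôme en $z$ (de coefficient dominant $y\neq0$), calcul du discriminant et factorisation $\Delta=(x+1)^2(x^2+y^2)$. La seule différence, mineure, est que vous justifiez l'écriture $\pm(x+1)\sqrt{x^2+y^2}$ en observant que le $\pm$ absorbe la valeur absolue $|x+1|$, tandis que l'article invoque la minoration $x\geq-1$ issue de la paramétrisation ; les deux arguments sont valables.
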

\begin{proof}
Réécrivons l'équation cartésienne \eqref{eq:mobius-cartesien} de la surface $M$, de façon a faire apparaître une équation du second degré en $z$ : \[yz^2-2z(x^2+y^2+x)+(yx^2+y^3-y)=0.\]On peut alors calculer le discriminant : \[
\begin{split}
\Delta&=4(x^2+y^2+x)^2-4y(yx^2+y^3-y)\\
\frac{\Delta}{4}&=x^4+y^4+x^2+2x^2y^2+2x^3+2xy^2-y^2x^2-y^4+y^2\\
&=(x^4+2x^3+x^2)+y^2(x^2+2x+1)\\
&=(x^2+y^2)(x+1)^2.
\end{split}\] Avec la paramétrisation \eqref{eq:mobius} de $x$, on peut voir qu'il est minoré par $-1$, obtenu lorsque $v=\pm1$. Les deux facteurs de $\Delta$ sont donc positifs, il existe ainsi des solutions réelles de l'équation du second degré, donné par : \[z=\frac{(x^2+y^2+x)\pm(x+1)\sqrt{x^2+y^2}}{y}.\]
\end{proof}

\begin{proposition}\label{prop:mobius-equiv}
L'application $m$ vérifie l'équivalence suivante : \begin{equation*}
\forall(t,v),(t',v')\in[-1,1]^2,\quad (t,v)\eqCM(t',v')\Leftrightarrow m(t,v)=m(t',v')
\end{equation*}
\end{proposition}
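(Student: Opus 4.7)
L'approche que je propose consiste à démontrer les deux implications séparément, en utilisant principalement la fonction $\atan2$ introduite en section~1.3 pour la réciproque.

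Pour l'implication directe, la seule branche non triviale de $\eqCM$ est $(t,\pm1)\eqCM(\mp t,\mp1)$. Un calcul direct, exploitant $\cos(\pm\pi/2)=0$, $\sin(\pm\pi)=0$ et $\cos(\pm\pi)=-1$, montre que $m(t,1)=m(-t,-1)=(-1,0,t/2)$, et symétriquement $m(t,-1)=m(-t,1)$.

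Pour la réciproque, je poserai $r(t,v) := 1 + \tfrac{t}{2}\cos\tfrac{v\pi}{2}$. Comme $|t|/2 \leq 1/2$ et $|\cos\tfrac{v\pi}{2}|\leq 1$, on a $r(t,v)\geq 1/2 > 0$, donc $\sqrt{x^2+y^2}=r(t,v)$ sans ambiguïté de signe. Je distinguerai alors deux cas selon la valeur de $y$. Si $y\neq 0$, l'égalité $y = r(t,v)\sin(v\pi)$ force $\sin(v\pi)\neq 0$, donc $v\in(-1,1)$ ; comme $v\pi\in(-\pi,\pi)$ reste en dehors de la discontinuité de $\atan2$, j'obtiendrai $v\pi = \atan2(y,x)$, d'où $v=v'$. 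L'égalité des rayons $r(t,v)=r(t',v)$, couplée à $\cos\tfrac{v\pi}{2} > 0$ sur $(-1,1)$, donnera alors directement $t=t'$.

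Le cas $y=0$ est le plus délicat et constitue le principal obstacle, car la paramétrisation cesse d'être injective précisément sur le bord du ruban. Cette égalité impose $\sin(v\pi)=0$, donc $v\in\{-1,0,1\}$ (de même pour $v'$). Or $v=0$ donne $x=1+t/2\geq 1/2>0$ alors que $v=\pm1$ donne $x=-1$ : le signe de $x$ force $v$ et $v'$ à être simultanément nuls, ou simultanément de valeur absolue $1$. Dans le premier sous-cas, $x=x'$ fournit $t=t'$. Dans le second, j'examinerai $z=\tfrac{t}{2}\sin\tfrac{v\pi}{2}=\pm\tfrac{t}{2}$ : selon que $v=v'$ ou $v=-v'$, on obtient respectivement $t=t'$ ou $t=-t'$, ce qui correspond exactement aux deux branches de $\eqCM$ et referme le raisonnement.
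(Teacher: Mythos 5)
Votre preuve est correcte, et la réciproque suit une route sensiblement différente de celle du papier. Le papier découpe selon les valeurs du \emph{paramètre source} ($t=0$, puis $t\neq0$ avec $v=\pm\tfrac12$, puis $t\neq0$ avec $v\neq\pm\tfrac12$), et doit alors gérer la $\pi$-périodicité de la tangente, c'est-à-dire éliminer un à un les décalages $v'=v+\varepsilon$ avec $\varepsilon\in\{0,\pm1,\pm2\}$ par des arguments de minoration/majoration. Vous court-circuitez cette discussion en observant d'emblée que le rayon $r(t,v)=1+\tfrac{t}{2}\cos\tfrac{v\pi}{2}\geq\tfrac12>0$ est lu directement sur le point image comme $\sqrt{x^2+y^2}$ : l'angle polaire $v\pi$ et le rayon sont alors déterminés par $(x,y)$ dès que $y\neq0$, ce qui donne $v=v'$ puis $t=t'$ sans périodicité à discuter, et il ne reste que le cas dégénéré $y=0$ (le bord $v\in\{-1,0,1\}$), tranché par le signe de $x$ puis par $z$. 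C'est exactement l'argument que le papier réutilise plus tard pour construire $g_m$ via $\atan2$ (proposition~\ref{prop:reciproque-mobius}), mais appliqué ici dès la preuve de l'injectivité ; votre version est plus courte et rend la positivité du rayon (le fait que la demi-largeur vaille $\tfrac12<1$) visiblement responsable de l'injectivité, là où le papier la fait réapparaître sous forme d'inégalités du type $1+\tfrac{t}{2}\cos\tfrac{v\pi}{2}\geq\tfrac12>-\tfrac12$. Deux broutilles : dans le cas $y\neq0$ on a en fait $v\in(-1,1)\setminus\{0\}$ (ce qui ne change rien, $\cos\tfrac{v\pi}{2}>0$ y reste vrai), et l'écriture $(t,\pm1)\eqCM(\mp t,\mp1)$ du sens direct est un lapsus de signes, corrigé par la phrase suivante où vous calculez bien $m(t,-1)=m(-t,1)$.
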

\begin{proof}
Montrons dans un premier temps l'implication $(\Rightarrow)$. Soient $(t,v),(t',v')$ deux couples de $[-1,1]^2$, tels que $(t,v)\eqCM(t',v')$. Montrons que l'on a bien l'égalité $m(t,v)=m(t',v')$.\begin{itemize}
    \item Dans le cas où $v\neq\pm1$, c'est évident, puisque $(t',v')=(t,v)$.

    \item En supposant $v=\pm1$, on a $(t',v')=\pm(t,v)$. Il suffit alors de traiter le cas où le signe est négatif, en remarquant que pour $t\in[-1,1]$, \[\begin{split}
m(t,-1)&=\left\{\begin{matrix}
x(t,-1)&=&\left(1+\frac{t}{2}\cos\frac{-\pi}{2} \right )\cos(-\pi)\\ 
y(t,-1)&=&\left(1+\frac{t}{2}\cos\frac{-\pi}{2} \right )\sin(-\pi)\\ 
z(t,-1)&=&\frac{t}{2}\sin\frac{-\pi}{2}
\end{matrix}\right.\\
&=\left\{\begin{matrix}
x(t,-1)&=&-1\\ 
y(t,-1)&=&0\\ 
z(t,-1)&=&-\frac{t}{2},
\end{matrix}\right.
\end{split}\]ainsi que, \[\begin{split}
m(-t,1)&=\left\{\begin{matrix}
x(-t,1)&=&\left(1-\frac{t}{2}\cos\frac{\pi}{2} \right )\cos(\pi)\\ 
y(-t,1)&=&\left(1-\frac{t}{2}\cos\frac{\pi}{2} \right )\sin(\pi)\\ 
z(-t,1)&=&-\frac{t}{2}\sin\frac{\pi}{2}
\end{matrix}\right.\\
&=\left\{\begin{matrix}
x(-t,1)=&-1\\ 
y(-t,1)=&0\\ 
z(-t,1)=&-\frac{t}{2},
\end{matrix}\right.
\end{split}\]on peut donc en déduire que $m(t,-1)=m(-t,1)$. On en conclut, pour $v=\pm1$, que l'on a bien $m(t',v')=m(t,v)$, avec $(t', v')=\pm(t,v)$.
\end{itemize}
Nous pouvons alors conclure que l'application $m$ est compatible avec la relation d'équivalence $\eqCM$.

\bigskip Désormais, montrons l'implication réciproque $(\Leftarrow)$.

Soient $(t,v),(t',v')\in[-1,1]^2$. Étudions l'équation $m(t,v)=m(t',v')$. \begin{itemize}
\item Supposons $t=0$. Ainsi, l'équation paramétrique nous donne : \[m(0,v)=m(t',v')\Longrightarrow\left\{\begin{matrix}
\cos(v\pi)&=&\left(1+\frac{t'}{2}\cos(\frac{v'\pi}{2})\right)\cos(v'\pi)\\ 
\sin(v\pi)&=&\left(1+\frac{t'}{2}\cos(\frac{v'\pi}{2})\right)\sin(v'\pi)\\ 
0&=&\frac{t'}{2}\sin(\frac{v'\pi}{2}).
\end{matrix}\right.\]En sommant les carrés des deux premières équations, on obtient l'égalité~$1=\big(1+\frac{t'}{2}\cos(\frac{v'\pi}{2})\big)^2$, c'est à dire $0=\frac{t'}{2}\cos(\frac{v'\pi}{2})$. Avec la dernière équation, on en déduit donc que $t'=0$. En reprenant les deux premières équations, on a $\cos(v\pi)=\cos(v'\pi)$ et $\sin(v\pi)=\sin(v'\pi)$ respectivement. Du fait que ses applications soient $2\pi$-périodiques, on en déduit soit~$v=v'$, ou soit $v'=\pm v$ si $v=\pm1$. Dans les deux cas, nous avons bien~$(t,v)\eqCM(t',v')$.
\item Supposons $t\neq 0$ et $v=\pm\frac{1}{2}$. Dans ce cas, l'équation paramétrique nous donne : \[m\left(t,\pm\frac{1}{2}\right)=m(t',v')\Longrightarrow\left\{\begin{matrix}
0&=&\left(1+\frac{t'}{2}\cos(\frac{v'\pi}{2})\right)\cos(v'\pi)\\ 
\pm\left(1+\frac{t}{2\sqrt{2}}\right)&=&\left(1+\frac{t'}{2}\cos(\frac{v'\pi}{2})\right)\sin(v'\pi)\\ 
\pm\frac{t}{\sqrt{2}}&=&t'\sin\left(\frac{v'\pi}{2}\right).
\end{matrix}\right.\]La première équation nous dit que l'un des deux facteurs est nul, et la seconde nous permet d'en déduire que c'est le terme cosinus. Ainsi, on a~$\cos(v'\pi)=0$, ce qui veut dire que $v'\in\{-\frac{1}{2},\frac{1}{2}\}=\{\pm v\}$. Observons les cas de plus près : \begin{itemize}
    \item Si $v'=-v$, la dernière équation nous donne $t'=-t$, par antisymétrie du sinus. Or, dans ce cas, la seconde équation nous donnerait : \[1+\frac{t}{2\sqrt{2}}=-\left(1+\frac{t'}{2}\frac{1}{\sqrt{2}}\right),\]ou encore :\[1+\frac{t}{2\sqrt{2}}=-1+\frac{t}{2\sqrt{2}}.\]Cela engendrerait $1=-1$, ce qui est impossible.
    \item Si $v'=v$, la troisième équation donne $t'=t$, et donc $(t',v')=(t,v)$.
\end{itemize}
Donc, nous pouvons en conclure que $(t,v)\eqCM(t',v')$.
\item Supposons que $t\neq0$ et $v\neq\pm\frac{1}{2}$. Ainsi, l'équation paramétrique donne : \[m(t,v)=m(t',v')\Rightarrow\left\{\begin{matrix}
\left(1+\frac{t}{2}\cos\frac{v\pi}{2} \right )\cos (v\pi)&=&\left(1+\frac{t'}{2}\cos\frac{v'\pi}{2} \right )\cos (v'\pi)\\ 
\left(1+\frac{t}{2}\cos\frac{v\pi}{2} \right )\sin (v\pi)&=&\left(1+\frac{t'}{2}\cos\frac{v'\pi}{2} \right )\sin (v'\pi)\\ 
\frac{t}{2}\sin\frac{v\pi}{2}&=&\frac{t'}{2}\sin\frac{v'\pi}{2}.
\end{matrix}\right.\]En divisant la seconde équation par la première, nous obtenons l'égalité suivante : $$\tan(v\pi)=\tan(v'\pi).$$ Sachant que la fonction tangente est $\pi$-périodique, nous en déduisons alors que $v'\pi=v\pi+\varepsilon\pi$, c'est à dire $v'=v+\varepsilon$, avec $\varepsilon\in\{0,\pm1,\pm2\}$. Distinguons tout les cas : \begin{itemize}
    \item Si $\varepsilon=0$, nous avons $v'=v$, ce qui nous force à avoir également $t'=t$ pour obtenir une solution de l'équation. Ainsi, on a $(t',v')=(t,v)$, ce qui nous permet de dire que $(t,v)\eqCM(t',v')$.
    \item Si $\varepsilon=\pm1$, cela veut dire que l'on a $\cos(v'\pi)=-\cos(v\pi)$, ainsi que~$\cos\big(\frac{v'\pi}{2}\big)=\pm\sin\big(\frac{v\pi}{2}\big)$. Nous avons alors : \[\begin{split}
    &\left(1+\frac{t}{2}\cos\frac{v\pi}{2} \right )\cos (v\pi)=\left(1\pm\frac{t'}{2}\sin\frac{v\pi}{2} \right )(-\cos (v\pi))\\
    &\Longrightarrow 1+\frac{t}{2}\cos\left(\frac{v\pi}{2}\right)=-1\pm\frac{t'}{2}\sin\left(\frac{v\pi}{2}\right)\quad\text{car }v\neq\pm\frac{1}{2}
    \end{split}\]Or, le terme de gauche est minoré par $\frac{1}{2}$, et celui de droite est majoré par $-\frac{1}{2}$. Il et donc impossible de trouver une solution à cette équation.
    \item Dans le cas où $\varepsilon=\pm2$, on a $v=\pm1$ et $v'=-v$. Également, on a~$\sin\big(\frac{v'\pi}{2}\big)=-\sin\left(\frac{v\pi}{2}\right)$, ce qui nous donne $t'=-t$ avec la dernière équation. On en déduit donc que $(t',v')=-(t,v)$, et par extension que~$(t,v)\eqCM(t',v')$.
\end{itemize}
\end{itemize}Nous pouvons finalement en conclure que $m$ vérifie $m(t,v)=m(t',v')$ si et seulement si $(t,v)\eqCM~(t',v')$.
\end{proof}

Par la propriété universelle du quotient \ref{th:quotient}, nous savons qu'il existe une unique application injective et continue $\overline{m}:C_M\to M$ tel que $\overline{m}=m\circ\pi_m$, avec~$\pi_m$ la surjection canonique pour le quotient par $\eqCM$. 

Dans la suite, nous chercherons à construire une inverse de cette application.

\begin{proposition}\label{prop:reciproque-mobius}
L'application suivante est une inverse à droite de l'application $m$ : \[\begin{split}
g_m:M&\longrightarrow [-1,1]^2\\
(x,y,z)&\mapsto\left ( \left\{\begin{matrix}
2(x-1) &\text{si }\atan2(y,x)=0 \\ 
\frac{2z}{\sin\left(\frac{\atan2(y,x)}{2}\right)} & \text{sinon}
\end{matrix}\right.,\quad\frac{1}{\pi} \atan2(y,x) \right ).
\end{split}\]
\end{proposition}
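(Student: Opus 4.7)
L'objectif est de démontrer l'identité $m\circ g_m = \mathrm{id}_M$. La stratégie consiste à partir d'un point $(x,y,z)\in M$ arbitraire, à lui choisir un antécédent $(t_0,v_0)\in[-1,1]^2$ par $m$ (qui existe par définition même de $M$), puis à relier les formules définissant $g_m$ à ce couple $(t_0,v_0)$. Pour simplifier la discussion, quitte à remplacer $(t_0,v_0)$ par l'antécédent équivalent $(-t_0,1)$ lorsque $v_0=-1$ (ce qui est licite d'après la proposition \ref{prop:mobius-equiv}), on peut supposer $v_0\in(-1,1]$.

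L'observation centrale est que le facteur $r := 1+\frac{t_0}{2}\cos\frac{v_0\pi}{2}$ est strictement positif, plus précisément $r\geq \tfrac{1}{2}$, ce qui résulte directement de $|t_0|\leq 1$ et $|\cos(v_0\pi/2)|\leq 1$. Les deux premières équations paramétriques $x = r\cos(v_0\pi)$ et $y = r\sin(v_0\pi)$ s'interprètent alors comme un passage en coordonnées polaires avec rayon positif. Combiné au fait que $v_0\pi\in(-\pi,\pi]$, ceci entraîne $\atan2(y,x) = v_0\pi$, donc la seconde coordonnée renvoyée par $g_m$ est précisément $v_0$, ce qui est bien un élément de $[-1,1]$.

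Pour la première coordonnée, je distinguerais deux cas selon la branche de $g_m$. Si $v_0 = 0$, alors $\atan2(y,x)=0$, $y = z = 0$ et $x = 1+t_0/2$, de sorte que la première branche donne $2(x-1) = t_0$. Si $v_0\neq 0$, on a $\sin(v_0\pi/2)\neq 0$ : la seconde branche est activée, et en substituant $z = \tfrac{t_0}{2}\sin(v_0\pi/2)$ dans $\frac{2z}{\sin(\atan2(y,x)/2)}$ on retrouve exactement $t_0$. Dans les deux cas, on a $g_m(x,y,z) = (t_0,v_0)$, d'où $m\circ g_m(x,y,z) = m(t_0,v_0) = (x,y,z)$.

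Le principal point de vigilance sera la gestion du bord $v_0=-1$, qui correspond précisément à la demi-droite $x<0,\,y=0$ sur laquelle $\atan2$ est discontinue : il faudra s'appuyer sur la proposition \ref{prop:mobius-equiv} pour voir que la torsion du ruban (exprimée par l'identification $(t_0,-1)\eqCM(-t_0,1)$) compense exactement ce saut d'angle, de sorte que le représentant choisi par $g_m$ est bien dans $[-1,1]^2$ et antécédent du bon point. Les autres vérifications sont purement calculatoires.
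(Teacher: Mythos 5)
Votre démonstration est correcte et suit essentiellement la même démarche que celle de l'article : on choisit un antécédent $(t_0,v_0)$ de $(x,y,z)$ par $m$, on calcule $g_m(x,y,z)$ et on vérifie que l'on retombe sur un antécédent du même point, la composante $\atan2$ redonnant l'angle $v_0\pi$ et la formule en $\frac{2z}{\sin(\cdot)}$ redonnant $t_0$. Votre normalisation préalable $v_0\in(-1,1]$ (licite par la proposition \ref{prop:mobius-equiv}) et l'appel direct à la caractérisation polaire de $\atan2$ pour un rayon $r\geq\frac{1}{2}>0$ vous épargnent simplement les sous-cas $v=-1$ et $x=0$, $x>0$, $x<0$ que l'article traite explicitement à partir de la définition \eqref{eq:atan2}.
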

\begin{proof}
Nous voulons montrer l'égalité $m\circ g_m=id_M$. Soit $(x,y,z)\in~M$, pour lequel on choisit $(t,v)\in[-1,1]^2$ tel que $m(t,v)=~(x,y,z)$. Posons enfin~$(t',v')=g_m(x,y,z)$. On veut alors montrer que $(t,v)\eqCM(t',v')$, ce qui suffit car $m\circ g_m(x,y,z)=m(t',v')=m(t,v)=(x,y,z)$.\begin{itemize}
    \item Si $v=0$, alors $m(t,0)=(1+\frac{t}{2},0,0)$. Or, $\atan2(y,x)=\atan2(0,1+\frac{t}{2})=0$, ce qui donne $g_m(1+\frac{t}{2},0,0)=\Big(2\big(1+\frac{t}{2}-1\big),0\Big)=(t,0)$. On peut en déduire que~$(t',v')=(t,0)$, et donc $(t,v)\eqCM(t',v')$.
    \item Si $v=1$, alors $m(t,1)=\big(-1,0,\frac{t}{2}\big)$. Or,  $\atan2(y,x)=\atan2(0,-1)=~\pi$, ce qui donne~$g_m(x,y,z)=g_m\big(-1,0,\frac{t}{2}\big)=\left(\frac{2\frac{t}{2}}{\sin(\frac{\pi}{2})},1\right)=(t,1)$. On en déduit~$(t',v')=~(t,1)$.
    \item Si $v=-1$, alors on a $m(t,-1)=m(-t,1)=\big(-1,0,-\frac{t}{2}\big)$. On obtient donc~$g_m(x,y,z)=g_m\big(-1,0,-\frac{t}{2}\big)=(-t,1)$, d'après le point précédent. Or, du fait que $(t,-1)\eqCM(-t,1)$, on a bien $(t,v)\eqCM(t',v')$.
    \item Enfin si $v\notin\{\pm1,0\}$, on a $\cos\big(\frac{v\pi}{2}\big)\neq0,\sin\big(\frac{v\pi}{2}\big)\neq0$, ainsi que~$\sin(v\pi)\neq0$. Autrement dit, $y\neq0$. Montrons que l'on a $v\pi=v'\pi=\atan2(y,x)$.\begin{itemize}
        \item Si $x=0$, cela veut dire que $\cos(v\pi)=~0$, donc $v\pi=\pm\frac{\pi}{2}$. En remarquant que $\varepsilon(y)=\varepsilon(\sin(v\pi))=\varepsilon(v)$, on a~$v\pi=\varepsilon(y)\frac{\pi}{2}=\atan2(y,0)$. 
        \item Si $x\neq0$, nous pouvons calculer ce qui suit : \[\frac{y}{x}=\frac{\Big(1+\frac{t}{2}\cos\left(\frac{v\pi}{2}\right)\Big)\sin(v\pi)}{\Big(1+\frac{t}{2}\cos\left(\frac{v\pi}{2}\right)\Big)\cos(v\pi)}=\frac{\sin(v\pi)}{\cos(v\pi)}=tan(v\pi).\]Dans le cas où $x>0$, nous avons $v\pi=\arctan\left(\frac{y}{x}\right)=\atan2(y,x)$. Dans le cas où $x<0$, l'angle $v\pi$ n'appartient pas au codomaine de $\arctan$, qui est $]-\frac{\pi}{2},\frac{\pi}{2}[$. Alors $\arctan(\frac{y}{x})$ nous donne son angle opposé~$v\pi\pm\pi$, qui lui est bien dans le codomaine. On en déduit alors que $v\pi=\arctan(\frac{y}{x})\pm\pi=\atan2(y,x)$, le signe devant $\pi$ donné par celui de $y$. En effet, si $y>0$, alors $\arctan(\frac{y}{x})+\pi\in]-\frac{\pi}{2},\frac{\pi}{2}[$ ; et si~$y<0$, alors $\arctan(\frac{y}{x})-\pi\in]-\frac{\pi}{2},\frac{\pi}{2}[$.
    \end{itemize}
    On peut ainsi conclure que l'on a bien $v\pi=v'\pi$. On peut alors noter~$v\pi=~\atan2(y,x)$.
    Il nous reste à montrer que $t'=t$. Il suffit de voir que $z=\frac{t}{2}\sin\left(\frac{v\pi}{2}\right)$, et donc que l'on a bien~$t'=\frac{2z}{\sin\left(\frac{v\pi}{2}\right)}=t$.
\end{itemize}
Nous pouvons en conclure que $(t,v)\eqCM(t',v')$, et donc que $g_m$ est bien une inverse à droite de $m$.
\end{proof}

\begin{proposition}\label{prop:sin-arc}
Pour $x\neq0$, on a $\sin(\arctan(x))=\frac{x}{\sqrt{1+x^2}}$.
\end{proposition}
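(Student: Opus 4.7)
Le plan est d'utiliser la relation $\tan = \sin/\cos$ combinée à l'identité pythagoricienne, en exploitant le fait que le codomaine de $\arctan$ est l'intervalle ouvert $\bigl]-\frac{\pi}{2},\frac{\pi}{2}\bigr[$, sur lequel le cosinus est strictement positif.

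Plus précisément, je commencerais par poser $\theta=\arctan(x)$, de sorte que $\tan(\theta)=x$ par définition de la fonction réciproque, et $\theta\in\bigl]-\frac{\pi}{2},\frac{\pi}{2}\bigr[$. L'objectif devient alors de calculer $\sin(\theta)$ en fonction de $x$. J'appliquerais ensuite l'identité classique $1+\tan^2(\theta)=\frac{1}{\cos^2(\theta)}$ (valide car $\theta\neq\pm\frac{\pi}{2}$), ce qui donne directement $\cos^2(\theta)=\frac{1}{1+x^2}$.

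Comme $\theta$ appartient à $\bigl]-\frac{\pi}{2},\frac{\pi}{2}\bigr[$, on sait que $\cos(\theta)>0$, ce qui permet de lever l'ambiguïté de signe et d'écrire $\cos(\theta)=\frac{1}{\sqrt{1+x^2}}$. Il ne reste plus qu'à multiplier par $\tan(\theta)=x$ pour en déduire
\[\sin(\theta)=\tan(\theta)\cos(\theta)=\frac{x}{\sqrt{1+x^2}},\]
ce qui conclut. L'hypothèse $x\neq 0$ n'est d'ailleurs pas réellement nécessaire pour l'argument — elle garantit simplement que $\theta\neq 0$, mais la formule reste trivialement vraie en $x=0$ ; elle est sans doute présente pour l'usage qui sera fait ultérieurement de cette proposition. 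Il n'y a pas d'obstacle réel ici : le seul point de vigilance est de bien justifier la positivité de $\cos(\theta)$ via l'appartenance de $\theta$ au codomaine de $\arctan$, sans quoi on obtiendrait seulement l'égalité au signe près.
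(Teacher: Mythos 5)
Votre démonstration est correcte, mais elle emprunte une voie différente de celle de l'article. Vous procédez analytiquement : en posant $\theta=\arctan(x)$, vous utilisez l'identité $1+\tan^2(\theta)=\frac{1}{\cos^2(\theta)}$ puis la positivité du cosinus sur le codomaine $\bigl]-\frac{\pi}{2},\frac{\pi}{2}\bigr[$ pour lever l'ambiguïté de signe, et vous concluez par $\sin(\theta)=\tan(\theta)\cos(\theta)$. L'article, lui, raisonne géométriquement sur un triangle rectangle $ABC$ de côtés $AB=1$ et $BC=x>0$, d'hypoténuse $\sqrt{1+x^2}$, en identifiant l'angle $\widehat{CAB}$ à $\arctan(x)$ via sa tangente. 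Votre approche a l'avantage de traiter uniformément les deux signes de $x$ (le cas $x<0$ découle directement de l'imparité implicite dans votre calcul, alors que l'argument du triangle ne couvre littéralement que $x>0$ et demanderait un mot supplémentaire sur l'imparité de $\sin\circ\arctan$ pour être complet) ; l'approche de l'article est plus visuelle et élémentaire, mais légèrement moins générale telle qu'elle est rédigée. Votre remarque sur le caractère superflu de l'hypothèse $x\neq0$ est également juste.
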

\begin{proof}
Soit $ABC$ un triangle rectangle en B, de longueurs $AB=1$ et~$BC=x>0$. L'hypoténuse est donc de longueur $AC=\sqrt{1+x^2}$. Nous pouvons calculer le sinus de l'angle en A : \[\sin(\widehat{CAB})=\frac{BC}{AC}=\frac{x}{\sqrt{1+x^2}}.\]De plus la tangente de cet angle vaut $\frac{BC}{AB}=\frac{x}{1}=x$. On peut alors en déduire que $\widehat{CAB}=\arctan(x)$, ce qui nous permet donc de conclure : \[\sin(\arctan(x))=\sin(\widehat{CAB})=\frac{x}{\sqrt{1+x^2}}.\]
\end{proof}

\begin{lemma}\label{lemma:mobius-inv-continuous}
L'application $\pi_m\circ g_m$ est l'inverse de $\overline{m}$, et elle est continue.
\end{lemma}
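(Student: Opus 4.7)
L'approche se décompose naturellement en deux étapes. Pour la première, je montrerais que $\pi_m\circ g_m$ est une inverse à droite de $\overline{m}$ en utilisant la relation $\overline{m}\circ\pi_m=m$ issue de la propriété universelle, combinée à l'identité $m\circ g_m=\mathrm{id}_M$ établie par la Proposition \ref{prop:reciproque-mobius} : on obtient directement $\overline{m}\circ(\pi_m\circ g_m)=m\circ g_m=\mathrm{id}_M$. Puisque $\overline{m}$ est injective (conséquence du Théorème \ref{th:quotient} appliqué à $m$, grâce à l'implication réciproque de la Proposition \ref{prop:mobius-equiv}), la Proposition \ref{prop:inj+inv=bij} permet de conclure immédiatement que $\overline{m}$ est bijective, d'inverse $\pi_m\circ g_m$.

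Pour la continuité, j'invoquerais le résultat classique affirmant qu'une bijection continue d'un espace compact vers un espace séparé est automatiquement un homéomorphisme. L'espace $C_M$ est compact comme image du compact $[-1,1]^2$ par l'application continue $\pi_m$, et la surface $M\subset\bb{R}^3$ est séparée en tant que sous-espace d'un espace métrique. L'application $\overline{m}:C_M\to M$ étant une bijection continue d'un compact vers un séparé, son inverse $\pi_m\circ g_m$ est donc nécessairement continue.

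Le point délicat qu'il faut garder à l'esprit, et qui constitue la véritable difficulté conceptuelle, est que l'application $g_m$ n'est \emph{pas} elle-même continue : la présence de $\atan2$ introduit une discontinuité le long du demi-axe $y=0,\ x<0$ de $M$, où les valeurs de l'angle sautent de $-\pi$ à $\pi$. Ce saut correspond à un passage entre les points de la forme $(-t,-1)$ et ceux de la forme $(t,1)$ dans $[-1,1]^2$, précisément identifiés par $\eqCM$. La composition par $\pi_m$ « efface » donc cette discontinuité, ce qui justifie intuitivement pourquoi la continuité de $\pi_m\circ g_m$ peut être espérée, et pourquoi l'argument de compacité-séparation est ici particulièrement bien adapté, en évitant d'avoir à l'établir par un calcul direct de continuité au voisinage des points problématiques.
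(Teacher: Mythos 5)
Votre première étape coïncide avec celle du papier : on y établit de la même façon $\overline{m}\circ\pi_m\circ g_m=m\circ g_m=id_M$, puis on conclut à la bijectivité via la proposition \ref{prop:inj+inv=bij}. En revanche, pour la continuité de l'inverse, votre argument est correct mais emprunte une route réellement différente. Le papier procède par vérification directe : il constate que la discontinuité de $\atan2$ sur la demi-droite $y=0$, $x<0$ est effacée par $\pi_m$ (exactement comme vous le décrivez), puis consacre l'essentiel de la preuve à un calcul de limite explicite — via la proposition \ref{prop:mobius-z-value}, la formule $\sin(\arctan(X))=X/\sqrt{1+X^2}$ et des développements limités de $\sqrt{x^2+y^2}$ — pour montrer que la première composante $\frac{2z}{\sin(\atan2(y,x)/2)}$ tend bien vers $2(x-1)$ quand $\atan2(y,x)\to0$. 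Vous court-circuitez entièrement cette analyse asymptotique par le théorème classique « bijection continue d'un compact vers un séparé $\Rightarrow$ homéomorphisme » : $C_M=\pi_m([-1,1]^2)$ est compact, $M\subset\bb{R}^3$ est séparé, donc l'inverse de $\overline{m}$ est automatiquement continue. C'est plus court, plus robuste (on évite notamment la question délicate du choix de la bonne racine pour $z$ dans la proposition \ref{prop:mobius-z-value}), et cela rend même la proposition \ref{prop:sin-arc} et le lemme de calcul superflus pour ce but. Ce que l'approche du papier achète en contrepartie, c'est une vérification constructive et élémentaire du comportement de la formule explicite de $g_m$ au voisinage des points de recollement — ce qui est dans l'esprit affiché de l'article (formules réciproques explicites) et ne présuppose pas le théorème de compacité. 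Votre remarque finale sur le rôle de $\pi_m$ dans l'effacement du saut de $-\pi$ à $\pi$ est exactement l'observation faite dans la preuve du papier pour la seconde composante.
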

\begin{proof}
On peut dans un premier temps voir que l'on a : \[\overline{m}\circ\pi_m\circ g_m=m\circ g_m=id_M.\]Cela montre que l'application $\overline{m}$ admet $\pi_m\circ g_m$ comme inverse à droite. 

\bigskip Montrons désormais que l'application $\pi_m\circ g_m$ est continue. Dans un premier temps, comme expliqué lors de l'introduction de l'application $\atan2(y,x)$, l'application n'est pas continue sur le segment $[-\pi,0[\times\{0\}$. Nous avons vu que les deux valeurs pour $y<0$ et $y>0$ sont respectivement $-\pi$ et $\pi$. Or, ces valeurs correspondent à une valeur de $v$ respectivement égale à $-1$ et 1. Sachant que par l'application $\pi_m$, ces valeurs sont identifiées l'une par rapport à l'autre, on peut alors en conclure que cette composante, composée avec $\pi_m$, est bien continue.

\bigskip Il nous reste alors à montrer la continuité de la première composante. Lorsque l'on a~$\atan2(y,x)\neq 0$, cette composante est donnée par $\frac{2z}{\sin\left(\frac{\atan2(y,x)}{2}\right)}$. De part ce que l'on vient de voir, cette application est bien continue lorsque l'on compose avec $\pi_m$. Il nous faut alors montrer que cette application tend bien vers $2(x-1)$ lorsque $\atan2(y,x)$ tend vers 0.

Dire que $\atan2(y,x)$ tend vers 0 revient à dire que $x>0$ et $y$ tend vers 0. Nous supposons alors $x>0$, et nous étudierons la limite pour $y$ qui tend vers 0 de cette application. Par la proposition \ref{prop:mobius-z-value}, on sait que l'on a : $$z=\frac{(x^2+y^2+x)\pm(x+1)\sqrt{x^2+y^2}}{y}.$$Or, si $y=0$ et $x>0$, cela implique que $\sin(v\pi)=0$, donc $v\in\{0,\pm1\}$. De plus, pour $v=\pm1$, on a $x<0$, ce qui contredit notre hypothèse de départ. Ainsi, $v=0$, et donc $z=0$. Dès lors, la valeur de $z$ ici est nécessairement~$z=~\frac{(x^2+y^2+1)-(x+1)\sqrt{x^2+y^2}}{y}$, car pour l'autre valeur, il n'existe pas de limite lorsque $y$ tend vers 0.

Ensuite, nous noterons $Y=\frac{y}{\sqrt{x^2+y^2}+x}$, de telle sorte que l'on souhaite étudier l'application $\frac{2z}{\sin(\arctan(Y))}$, d'après la définition de $\atan2$. Or, avec la proposition précédente, nous avons l'égalité : \begin{equation}
    \forall X\neq0, \qquad \sin(\arctan(X))=\frac{X}{\sqrt{1+X^2}},
\end{equation}ce qui nous permet d'obtenir les égalités suivantes, lorsque $y$ est suffisamment proche de 0. Nous noterons $r=\sqrt{x^2+y^2}$, de telle sorte que l'on ait $Y=\frac{y}{r+x}$, et $z=\frac{(r^2+x)-(x+1)r}{y}$ : \[\begin{split}
\frac{2z}{\sin(\arctan(Y))}&=\frac{2z\sqrt{1+Y^2}}{Y}\\
&=2\frac{(r^2+x)-(x+1)r}{y}\times\frac{1}{Y}(\sqrt{1+Y^2})\\
&=2\frac{(r^2+x)-(x+1)r}{y}\times\left(\frac{1}{Y}\left(1+\frac{Y^2}{2}\right)+o(Y^2)\right)\\
&=2\frac{(r^2+x)-(x+1)r}{y}\times\frac{r+x}{y}\left(1+\frac{y^2}{2(r+x)^2}+o(y^2)\right).
\end{split}\]En utilisant le développement limité en 0 de $\sqrt{1+X}$ sur $r$, on obtient l'approximation $r=x\sqrt{1+\left(\frac{y}{x}\right)^2}=x\left(1+\frac{y^2}{2x^2}\right)+o(y^3)=x+\frac{y^2}{2x}+o(y^3)$. Lorsque l'on utilise ce résultat dans les calculs précédents, on obtient : \[\begin{split}
\frac{2z}{\sin(\arctan(Y))}&=2\frac{(x^2+y^2+x)-(x+1)(x+\frac{y^2}{2x})}{y}\frac{2x+\frac{y^2}{2x}}{y}+o(y^2)\\
&=2\frac{x^2+y^2+x-x^2-x-\frac{y^2}{2}-\frac{y^2}{2x}}{y}\frac{2x+\frac{y^2}{2x}}{y}+o(y^2)\\
&=2\frac{y^2(\frac{1}{2}-\frac{1}{2x})}{y^2}\left(2x+\frac{y^2}{2x}\right)+o(y^2)\\
&=\left(1-\frac{1}{x}\right)2x+o(y^2)\\
&=2x-2+o(y^2)\\
&=2(x-1)+o(y^2).
\end{split}\]Nous pouvons alors en conclure que la limite lorsque $\atan2(x,y)$ tend vers 0, correspond à la valeur donnée lorsque $\atan2(y,x)=0$. Cela veut donc dire que la première composante de $g_m$ est bien continue.

Nous pouvons alors en conclure que l'application $g_m$ est bien continue sur la surface $M$.
\end{proof}

\begin{theorem}\label{th:mobius-homeo}
Le quotient topologique $C_M$ et la surface $M$ de $\bb{R}^3$ sont homéomorphes.
\end{theorem}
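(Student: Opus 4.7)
La stratégie suit exactement le schéma annoncé en section 1.2 : la propriété universelle du quotient fournit une application continue et injective $\overline{m} : C_M \to M$, et il suffit de vérifier qu'elle admet un inverse à droite continu pour conclure via la Proposition \ref{prop:inj+inv=bij}. Aucune construction nouvelle n'est donc à introduire : tous les objets nécessaires sont déjà en place.

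Plus précisément, j'appliquerais d'abord le Théorème \ref{th:quotient} à l'application $m$ : d'après la Proposition \ref{prop:mobius-equiv}, celle-ci est compatible avec $\eqCM$ et vérifie également la condition réciproque $m(t,v)=m(t',v')\Rightarrow (t,v)\eqCM(t',v')$. On en déduit l'existence de l'unique application continue et injective $\overline{m} : C_M \to M$ telle que $m = \overline{m}\circ\pi_m$. Ensuite, j'invoquerais la Proposition \ref{prop:reciproque-mobius}, qui fournit une inverse à droite $g_m$ de $m$, et le Lemme \ref{lemma:mobius-inv-continuous}, qui assure que $\pi_m\circ g_m$ est une inverse à droite \emph{continue} de $\overline{m}$. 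La Proposition \ref{prop:inj+inv=bij} permet alors de conclure que $\overline{m}$ est bijective, d'inverse $\pi_m\circ g_m$, et comme cet inverse est continu, $\overline{m}$ réalise un homéomorphisme entre $C_M$ et $M$.

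Il ne reste aucune difficulté substantielle à cette étape : l'essentiel du travail — à savoir la construction explicite de $g_m$, la vérification de l'équivalence entre $(t,v)\eqCM(t',v')$ et $m(t,v)=m(t',v')$, puis l'étude délicate de la continuité de $\pi_m\circ g_m$ aux points où $\atan2$ présente une discontinuité — a déjà été effectué dans les énoncés précédents. Ce théorème joue donc le rôle de synthèse finale, reposant entièrement sur l'articulation des résultats établis dans la section.
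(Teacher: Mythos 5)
Votre démonstration est correcte et suit exactement la même démarche que celle de l'article : injectivité et continuité de $\overline{m}$ par la Proposition \ref{prop:mobius-equiv} et le Théorème \ref{th:quotient}, inverse à droite continue $\pi_m\circ g_m$ par le Lemme \ref{lemma:mobius-inv-continuous}, puis conclusion via la Proposition \ref{prop:inj+inv=bij}. Rien à redire : ce théorème est bien une simple synthèse des résultats précédents, et vous l'articulez comme le fait l'article.
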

\begin{proof}
Le résultat de la proposition \ref{prop:mobius-equiv}, ainsi que la propriété universelle du quotient, impliquent que l'application $\overline{m}$ est injective et continue. Dans le lemme \ref{lemma:mobius-inv-continuous}, nous avons montré que $\pi_m\circ g_m$ était l'inverse à droite de $\overline{m}$, et qu'elle était elle aussi continue. Finalement, avec la proposition \ref{prop:inj+inv=bij}, nous pouvons en déduire que $\overline{m}$ est une application bijective et continue. Son inverse, étant $\pi_m\circ g_m$, est également continue, ce qui nous permet de dire que $\overline{m}$ est un homéomorphisme entre l'espace quotient $C_M$ et la surface $M$ paramétrée de~$\bb{R}^3$.
\end{proof}

\section{Le tore de dimension 2}
\subsection{Polygone fondamental}

Le second espace est le \emph{tore de dimension 2}, que l'on appellera simplement tore par la suite. C'est également un complexe cellulaire obtenu par l'identification des côtés opposés d'un carré, comme sur la figure suivante.

\begin{figure}[H]
    \centering
    \includegraphics[width=0.3\linewidth]{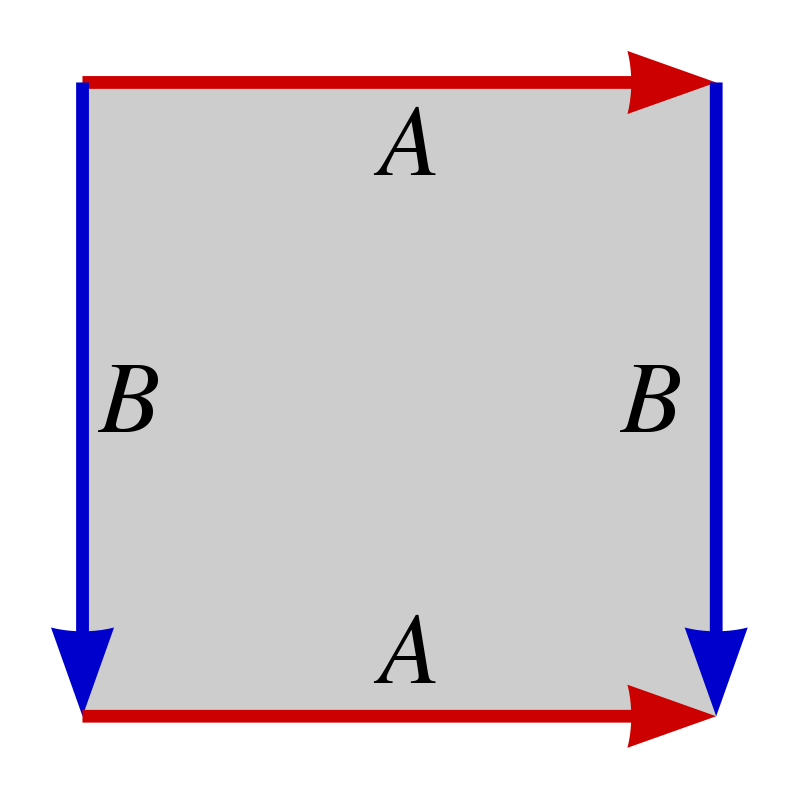}
    \caption{Polygone fondamental du tore de dimension 2}
    \label{fig:torus-as-square}
\end{figure}

\begin{definition}
On considère le carré $[-1,1]^2$, ainsi qu'une relation d'équivalence $\eqCT$ définie par : \[\forall(u,v),(u',v')\in [-1,1]^2,\quad (u,v)\eqCT(u',v')\Leftrightarrow\left\{\begin{matrix}
(u',v')=(u,\pm u) & \text{si }v=\pm1 \\ 
(u',v')=(\pm u,v) & \text{si }u=\pm1\\
(u',v')=(u,v)&\text{sinon}.
\end{matrix}\right.\]On définit ainsi l'espace quotient $C_T:=[-1,1]^2/\eqCT$.
\end{definition}

\subsection{Plongement dans l'espace euclidien}

\begin{figure}[H]
    \centering
    \includegraphics[width=0.5\linewidth]{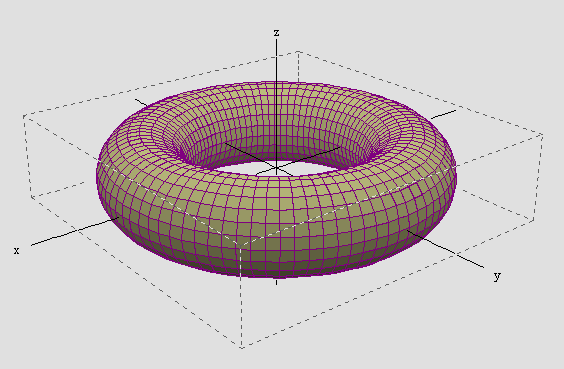}
    \caption{Surface paramétrée comme définition \ref{def:torus-R3}, avec $R=3r$}
    \label{fig:torus}
\end{figure}

De même que pour notre premier complexe cellulaire, nous pouvons plonger celui-ci dans l'espace $\bb{R}^3$, par la définition qui suit. De manière intuitive, nous pouvons imaginer partir d'une feuille de papier sur laquelle on recolle les côtés identiques du polygone fondamental, pour obtenir la figure ci-dessus.

\begin{definition}\label{def:torus-R3}
Soient $R>r>0$. On considère la surface paramétrée suivante : \begin{equation}\label{eq:torus}
\left\{\begin{matrix}
x(u,v)&=&\big(R+r\cos (v\pi)\big)\cos (u\pi)\\ 
y(u,v)&=&\big(R+r\cos (v\pi)\big)\sin (u\pi)\\ 
z(u,v)&=&r\sin (v\pi),
\end{matrix}\right.
\end{equation}avec $u,v\in[-1,1]$.

Nous définirons l'application $t:(u,v)\mapsto\big(x(u,v),y(u,v),z(u,v)\big)$, se référant aux notations de l'équation \eqref{eq:torus}, ainsi que la surface de $\bb{R}^3$ contenant tout les points de la paramétrisation $T=im(t)=\left\{t(u,v),u,v\in[-1,1]\right\}$.

Cette surface admet comme équation cartésienne la suivante :\[(\sqrt{x^2+y^2}-R)^2+z^2=r^2.\]
\end{definition}

\begin{remark}
La valeur de $v$ nous fait tourner autour du tube, tandis que la valeur de $u$ nous fait tourner le long de celui-ci.
\end{remark}

\begin{proposition}\label{prop:torus-relation}
L'application $t$ vérifie l'équivalence suivante : \begin{equation*}
\forall(u,v),(u',v')\in[-1,1]^2,\quad (u,v)\eqCT(u',v')\Leftrightarrow t(u,v)=t(u',v').
\end{equation*}
\end{proposition}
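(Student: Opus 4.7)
Le plan consiste à démontrer séparément les deux implications, en exploitant la structure particulière de la paramétrisation $(x,y,z)=\big((R+r\cos(v\pi))\cos(u\pi),(R+r\cos(v\pi))\sin(u\pi),r\sin(v\pi)\big)$, par analogie avec ce qui a été fait pour le ruban de Möbius.

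Pour l'implication $(\Rightarrow)$, je me contenterais de dérouler les trois cas de la définition de $\eqCT$. Lorsque $v=\pm 1$, on a $\cos(v\pi)=-1$ et $\sin(v\pi)=0$, de sorte que $t(u,1)$ et $t(u,-1)$ coïncident tous deux avec $\big((R-r)\cos(u\pi),(R-r)\sin(u\pi),0\big)$. Symétriquement, lorsque $u=\pm 1$, on a $\cos(u\pi)=-1$ et $\sin(u\pi)=0$, de sorte que $t(1,v)$ et $t(-1,v)$ coïncident tous deux avec $\big(-(R+r\cos(v\pi)),0,r\sin(v\pi)\big)$. Ceci établit que $t$ est compatible avec $\eqCT$.

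Pour l'implication $(\Leftarrow)$, je procéderais en deux étapes : extraire d'abord l'information sur le couple $(v,v')$, puis sur le couple $(u,u')$. La troisième coordonnée donne immédiatement $\sin(v\pi)=\sin(v'\pi)$, et la somme $x^2+y^2=\big(R+r\cos(v\pi)\big)^2$, combinée à la positivité $R+r\cos(v\pi)\geq R-r>0$ garantie par l'hypothèse $R>r>0$, permet d'identifier $\cos(v\pi)=\cos(v'\pi)$. Comme $v\pi,v'\pi\in[-\pi,\pi]$, ces deux égalités imposent soit $v=v'$, soit $\{v,v'\}=\{-1,1\}$. Dans les deux cas, le facteur commun $R+r\cos(v\pi)=R+r\cos(v'\pi)$ reste strictement positif, ce qui permet de diviser les deux premières équations par celui-ci et d'obtenir $\cos(u\pi)=\cos(u'\pi)$ ainsi que $\sin(u\pi)=\sin(u'\pi)$, forçant analogiquement $u=u'$ ou $\{u,u'\}=\{-1,1\}$.

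Il restera enfin à combiner les quatre configurations possibles pour conclure que $(u,v)\eqCT(u',v')$ : si $u=u'$ et $v=v'$, les points sont égaux ; si $u=u'$ et $\{v,v'\}=\{-1,1\}$, la première clause de la définition s'applique ; si $v=v'$ et $\{u,u'\}=\{-1,1\}$, c'est la deuxième ; et si les deux paires sont extrêmes, alors $(u,v)$ et $(u',v')$ sont deux sommets du carré, identifiés en enchaînant une application de chacune des deux premières clauses (transitivité via un sommet intermédiaire). L'étape la plus délicate, bien que purement routinière, sera d'invoquer correctement l'hypothèse $R>r$ au moment où l'on identifie la racine positive de $x^2+y^2$ et où l'on divise par le facteur commun, afin que la division soit toujours licite sans distinction de cas.
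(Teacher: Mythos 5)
Votre démonstration est correcte, mais elle suit pour le sens $(\Leftarrow)$ un chemin réellement différent de celui de l'article. Vous partez de la coordonnée radiale : l'identité $x^2+y^2=\big(R+r\cos(v\pi)\big)^2$ et la positivité stricte $R+r\cos(v\pi)\geq R-r>0$ vous donnent d'emblée $\cos(v\pi)=\cos(v'\pi)$, la troisième coordonnée fournit $\sin(v\pi)=\sin(v'\pi)$, puis la division des deux premières équations par le facteur commun strictement positif livre simultanément $\cos(u\pi)=\cos(u'\pi)$ et $\sin(u\pi)=\sin(u'\pi)$ ; aucune division par $\cos(u\pi)$ n'est nécessaire. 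L'article, lui, calque son argument sur celui du ruban de Möbius : il sépare le cas $u=\pm\frac{1}{2}$ du cas général, forme le quotient $\tan(u\pi)=\tan(u'\pi)$, écrit $u'=u+\varepsilon$ avec $\varepsilon\in\{0,\pm1,\pm2\}$ et élimine $\varepsilon=\pm1$ par l'inégalité $2R+r\big(\cos(v\pi)+\cos(v'\pi)\big)>0$. Les deux arguments reposent de façon essentielle sur l'hypothèse $R>r$, mais le vôtre évite toute disjonction de cas sur $u$ et traite $v$ avant $u$, ce qui est plus court et plus robuste (pas de point où le dénominateur pourrait s'annuler) ; l'approche de l'article a pour seul avantage son parallélisme avec la preuve de la proposition \ref{prop:mobius-equiv}, où l'astuce du rayon est moins directe parce que le paramètre de largeur $t$ intervient dans le rayon. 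Votre remarque finale sur les sommets du carré --- identifiés en enchaînant deux clauses de la définition de $\eqCT$, donc par transitivité via un sommet intermédiaire --- explicite d'ailleurs un point que l'article laisse implicite dans son cas $\varepsilon=\pm2$.
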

\begin{proof}
Montrons dans un premier temps l'implication $(\Rightarrow)$. Soient $(u,v),(u',v')$ deux couples de $[-1,1]^2$, tels que $(u,v)\eqCT(u',v')$. Montrons que l'on a l'égalité $t(u,v)=t(u',v')$.\begin{itemize}
    \item Si $u\neq 1$ ou $v\neq 1$, c'est évident, puisque $(u',v')=(u,v)$.
    \item Si $u=\pm1$, alors $(u',v')=(\pm u,v)$. De plus, on a : \[\begin{split}
t(-1,v)&=\left\{\begin{matrix}
x(-1,v)&=&\big(R+r\cos (v\pi)\big)\cos(-\pi)\\ 
y(-1,v)&=&\big(R+r\cos (v\pi)\big)\sin(-\pi)\\ 
z(-1,v)&=&r\sin (v\pi)
\end{matrix}\right.\\
&=\left\{\begin{matrix}
x(-1,v)&=&\big(R+r\cos (v\pi)\big)\cos \pi\\ 
y(-1,v)&=&\big(R+r\cos (v\pi)\big)\sin \pi\\ 
z(-1,v)&=&r\sin (v\pi)
\end{matrix}\right.\\
&=t(1,v).
\end{split}\]Dès lors, on peut en déduire que $t(u',v')=t( u,v)$.
\item Si $v=\pm1$, alors $(u',v')=(u,\pm v)$. De plus, on a : \[\begin{split}
t(u,-1)&=\left\{\begin{matrix}
x(u,-1)&=&\big(R+r\cos (-\pi)\big)\cos(u\pi)\\ 
y(u,-1)&=&\big(R+r\cos (-\pi)\big)\sin(u\pi)\\ 
z(u,-1)&=&r\sin (-\pi)
\end{matrix}\right.\\
&=\left\{\begin{matrix}
x(u,-1)&=&\big(R+r\cos \pi\big)\cos (u\pi)\\ 
y(u,-1)&=&\big(R+r\cos \pi\big)\sin (u\pi)\\ 
z(u,-1)&=&r\sin \pi
\end{matrix}\right.\\
&=t(u,1).
\end{split}\]Nous pouvons alors en déduire que $t(u',v')=t(u,v)$.
\end{itemize}
Nous pouvons alors conclure que $t$ est compatible avec la relation d'équivalence.

\bigskip Montrons désormais l'implication $(\Leftarrow)$. Soient $(u,v),(u',v')\in[-1,1]^2$ tels que $t(u,v)=t(u',v')$.\begin{itemize}
\item Supposons que $u=\pm\frac{1}{2}$. L'équation paramétrée nous donne : \[t\left(\pm\frac{1}{2},v\right)=t(u',v')\Rightarrow\left\{\begin{matrix}
0&=&(R+r\cos(v'\pi))\cos(u'\pi)\\ 
\pm\big(R+r\cos(v\pi)\big)&=&(R+r\cos(v'\pi))\sin(u'\pi)\\ 
r\sin(v\pi)&=&r\sin(v'\pi).
\end{matrix}\right.\]Avec les deux premières équations, on peut en déduire que $\cos(u'\pi)=0$. Ainsi, on sait que $u'\in\left\{-\frac{1}{2},\frac{1}{2}\right\}=\{\pm u\}$. Traitons alors chacun des cas : \begin{itemize}
    \item Si $u'=-u$, alors $\sin(u'\pi)=-\sin(u\pi)$, et ainsi la deuxième équation donnerait $(R+r\cos(v\pi))=-(R+r\cos(v'\pi))$. En regroupant les termes du même côté, on obtient $2R+r\big(\cos(v\pi)+\cos(v'\pi)\big)=0$, qui n'a pas de solution car $R>r$.
    \item Si $u'=u$, nous obtenons des deux dernières équations les égalités~$\sin(v\pi)=\sin(v'\pi)$ et $\cos(v\pi)=\cos(v'\pi)$. On en déduit soit $v'=v$, soit $v'=-v$ si $v=\pm1$, ce qui revient dans tout les cas à dire $(u',v')\eqCT(u,v)$.
\end{itemize}
\item Supposons maintenant que $u\neq\pm\frac{1}{2}$. Dans ce cas, nous avons $\cos(v\pi)\neq0$. Ainsi, on retrouve avec l'équation paramétrée :\[t(u,v)=t(u',v')\Rightarrow\left\{\begin{matrix}
(R+r\cos(v\pi))\cos(u\pi)&=&(R+r\cos(v'\pi))\cos(u'\pi)\\ 
(R+r\cos(v\pi))\sin(u\pi)&=&(R+r\cos(v'\pi))\sin(u'\pi)\\ 
r\sin(v\pi)&=&r\sin(v'\pi).
\end{matrix}\right.\]En divisant la deuxième équation par la première, on a $\tan(u\pi)=\tan(u'\pi)$. Puisque la tangente est $\pi$-périodique, on a $u'=u+\varepsilon$, avec $\varepsilon\in\{0,\pm1,\pm2\}$. Regardons les différents cas :\begin{itemize}
    \item Si $\varepsilon=0$, on a $u'=u$ et donc $\cos(u'\pi)=\cos(u\pi)$. Ainsi, avec simplification des deux premières équations, on obtient $\cos(v'\pi)=\cos(v\pi)$ et $\sin(v'\pi)=\sin(v\pi)$. On peut en déduire que $v'=v$, et donc par extension que $(u,v)\eqCT(u',v')$.
    \item Si $\varepsilon=\pm1$, on aurait $\cos(u'\pi)=-\cos(u\pi)$, et la première équation donnerait : \[\begin{split}
        &(R+r\cos(v\pi))\cos(u\pi)=-(R+r\cos(v'\pi))\cos(u\pi)\\
        \Longrightarrow\ &R+r\cos(v\pi)=-R-r\cos(v'\pi)\qquad \text{car }\cos(u\pi)\neq0\\
        \Longrightarrow\ &2R+r\cos(v\pi)=-r\cos(v'\pi).
    \end{split}\]Comme $\min\big(2R+\cos(v\pi)\big)>\max\big(-r\cos(v'\pi)\big)$, il n'existe pas de solution à cette équation. On en conclut que ces cas sont impossible.
    \item Si $\varepsilon=\pm2$, alors $u=\pm1$ et $u'=-u$. Étant donné que le cosinus et le sinus sont $2\pi$-périodique, on a $\cos(u'\pi)=\cos(u\pi)$ et $\sin(u'\pi)=\sin(u\pi)$. Dans ce cas, les deux premières équations donnent $\cos(v\pi)=\cos(v'\pi)$, et la dernière $\sin(v\pi)=\sin(v'\pi)$. Par la $2\pi$-périodicité des applications, nous avons $v'=v$ ou $v=1$ et $v'=-1$. Dans tout les cas, cela revient à dire que $(u',v')\eqCT(u,v)$.
\end{itemize}
\end{itemize}Nous pouvons alors en conclure que l'application $t$ vérifie $t(u,v)=t(u',v')$ si et seulement si $(u,v)\eqCT(u',v')$.
\end{proof}

Par la propriété universelle du quotient \ref{th:quotient}, nous savons qu'il existe une unique application injective $\overline{t}:C_T\to T$ tel que $t=\overline{t}\circ\pi_t$, avec $\pi_t$ la surjection canonique pour le quotient par $\eqCT$.

\begin{proposition}\label{prop:torus-reciproque}
L'application suivante est une inverse à droite de l'application $t$ : \[\begin{split}
g_t:T&\longrightarrow [-1,1]^2\\
(x,y,z)&\mapsto\left (\frac{1}{\pi}\atan2(y,x),\left\{\begin{matrix}
\frac{1}{\pi}\atan2(z,x-R)&\text{si } \atan2(y,x)=0\\
\frac{1}{\pi}\atan2(z,-x-R)&\text{si } \atan2(y,x)=\pi\\
\frac{1}{\pi}\atan2\left(z,\frac{y}{\sin\left(\atan2(y,x)\right)}-R\right)&\text{sinon}
\end{matrix}\right. \right ).
\end{split}\]
\end{proposition}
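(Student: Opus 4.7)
Mon plan est de procéder comme dans la démonstration de la proposition \ref{prop:reciproque-mobius} : il suffit de montrer $t\circ g_t=id_T$. Pour ce faire, je prends $(x,y,z)\in T$, je choisis un antécédent $(u,v)\in[-1,1]^2$ tel que $t(u,v)=(x,y,z)$, puis je pose $(u',v')=g_t(x,y,z)$. Par compatibilité de $t$ avec $\eqCT$ (proposition \ref{prop:torus-relation}), il suffira d'établir $(u,v)\eqCT(u',v')$, car alors $t\circ g_t(x,y,z)=t(u',v')=t(u,v)=(x,y,z)$.

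Pour la première coordonnée, l'observation clé est que $R+r\cos(v\pi)>0$ puisque $R>r>0$. Ainsi $(x,y)=\big(R+r\cos(v\pi)\big)\big(\cos(u\pi),\sin(u\pi)\big)$ est un vecteur non nul d'angle polaire $u\pi\pmod{2\pi}$. En reprenant la discussion finale de la preuve de la proposition \ref{prop:reciproque-mobius} (cas $x>0$, $x<0$, $x=0$), on en déduit que $\atan2(y,x)=u\pi$ lorsque $u\in(-1,1]$, et que $\atan2(y,x)=\pi$ lorsque $u=-1$. Dans ce dernier cas $u'=1$, et l'identification $(-1,v)\eqCT(1,v)$ absorbe l'ambiguïté, ce qui permet dans tous les cas de supposer $u\pi=\atan2(y,x)$.

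Pour la seconde coordonnée, je procède par disjonction selon la valeur de $\atan2(y,x)$, en suivant la structure de la définition de $g_t$. Si $\atan2(y,x)=0$, alors $u=0$ ; on a donc $x-R=r\cos(v\pi)$ et $z=r\sin(v\pi)$, et le même argument d'angle polaire appliqué au vecteur $(x-R,z)$ donne $\atan2(z,x-R)=v\pi$. Si $\atan2(y,x)=\pi$, alors $u=\pm1$, et quitte à substituer le représentant équivalent $u=1$, on a $-x-R=r\cos(v\pi)$ et $z=r\sin(v\pi)$, donc $\atan2(z,-x-R)=v\pi$. Sinon, $\sin(u\pi)\neq 0$ et $\sin\big(\atan2(y,x)\big)=\sin(u\pi)$, d'où $\frac{y}{\sin(\atan2(y,x))}-R=r\cos(v\pi)$, puis $\atan2\big(z,r\cos(v\pi)\big)=v\pi$. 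Dans chacun des trois cas, on obtient $v'=v$, ou bien $v'=-v$ lorsque $v=\pm 1$ (par identification de $\atan2$ sur les bords), ce qui suffit pour conclure $(u,v)\eqCT(u',v')$.

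Le principal obstacle technique réside dans la gestion des discontinuités de $\atan2$ et des cas de bord ($u=\pm1$, $v=\pm1$), pour lesquels on ne peut pas espérer l'égalité stricte $(u,v)=(u',v')$. C'est précisément la relation $\eqCT$, qui identifie les arêtes opposées du carré, qui rend l'énoncé valide : la stratégie consiste donc à n'établir qu'une équivalence, non une égalité. Le découpage en trois cas dans la définition de $g_t$ est la traduction directe de cette subtilité et permet d'éviter la singularité $\sin(\atan2(y,x))=0$ du cas générique.
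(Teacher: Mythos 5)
Votre démonstration est correcte et suit essentiellement la même stratégie que celle du papier : ramener $t\circ g_t=id_T$ à l'équivalence $(u,v)\eqCT(u',v')$ via la proposition \ref{prop:torus-relation}, puis vérifier que $\atan2$ restitue les angles $u\pi$ et $v\pi$ à l'identification des bords près. La seule différence est organisationnelle : vous traitez la première coordonnée uniformément grâce à la positivité de $R+r\cos(v\pi)$ et ne découpez qu'en suivant les trois branches de la définition de $g_t$, là où le papier distingue les cas $u\in\{0,\pm\tfrac{1}{2},\pm1\}$ et le cas générique ; votre découpage est un peu plus économe mais le contenu calculatoire est identique.
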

\begin{proof}
Nous voulons montrer l'égalité $t\circ g_t=id_T$. Soit $(x,y,z)\in~T$, pour lequel on choisit $(u,v)\in[-1,1]^2$ tel que $t(u,v)=(x,y,z)$. Posons enfin~$(u',v')=g_t(x,y,z)$. On veut alors montrer que $(u,v)\eqCT(u',v')$, ce qui suffit car $t\circ g_t(x,y,z)=t(u',v')=t(u,v)=(x,y,z)$. \begin{itemize}
    \item Si $u=0$, alors $t(0,v)=(R+r\cos(v\pi),0,r\sin(v\pi))$. Comme on a~$y=0$ et~$x>0$, on obtient :\begin{equation*}\label{eq:torus-proof-inv}
g_t(x,y,z)=g_t(R+r\cos(v\pi),0,r\sin(v\pi))=\left(0,\frac{1}{\pi}\atan2(r\sin(v\pi),r\cos(v\pi))\right).
    \end{equation*}On remarque ici que $u'=0=u$. Traitons ainsi les cas suivants séparément.\begin{itemize}
        \item Si $v=1$, alors $\frac{1}{\pi}\atan2(r\sin(v\pi),r\cos(v\pi))=\frac{1}{\pi}\atan2(0,-1)=1$. Ce qui veut dire que $v=v'$, donc $(u,v)=(u',v')$
        \item Si $v=-1$, alors $\frac{1}{\pi}\atan2(r\sin(v\pi),r\cos(v\pi))=\frac{1}{\pi}\atan2(0,-1)=1$. Or, comme $(0,1)\eqCT(0,-1)$, on a donc $(u,v)\eqCT(u',v')$.
        \item Si $v\neq\pm1$, alors $\atan2$ retourne l'angle $v\pi$. Autrement dit, on a bien~$v=v'$, donc $(u,v)\eqCT(u',v')$
    \end{itemize}
    \item Si $u=\frac{1}{2}$, alors $t\big(\frac{1}{2},v\big)=\big(0,R+r\cos(v\pi),r\sin(v\pi)\big)$. Or, comme on a~$\atan2(y,x)=\atan2(R+r\cos(v\pi),0)=\frac{\pi}{2}$, on obtient : \[g_t(x,y,z)=g_t\big(0,R+\cos(v\pi),r\sin(v\pi)\big)=\left(\frac{1}{2},\frac{1}{\pi}\atan2(r\sin(v\pi),r\cos(v\pi)\right).\]Cela nous ramène au résultat précédent pour la composante de droite, de plus que $u'=\frac{1}{2}=u$. Nous pouvons alors en déduire $\big(u,v)\eqCT(u',v')$.
    \item Si $u=-\frac{1}{2}$, on a $t\big(-\frac{1}{2},v\big)=\big(0,-R-r\cos(v\pi),r\sin(v\pi)\big)$. Dans ce cas, on a $y<0$, donc $g_t(x,y,z)=\left(-\frac{1}{2},\frac{1}{\pi}\atan2(r\sin(v\pi),r\cos(v\pi)\right)$. Ici encore, on peut en déduire que $\big(u,v\big)\eqCT(u',v')$.
    \item Si $u=\pm1$, on a $(x,y,z)=t(\pm1,v)=\big(-(R+r\cos(v\pi)), 0, r\sin(v\pi)\big)$. En remarquant que $\atan2(y,x)=\atan2(0,-R-r\cos(v\pi))=\pi$, on en déduit que~$g_t(x,y,z)=\left(1,\atan2(z,R-x)\right)=\big(1,\atan2(r\sin(v\pi),r\cos(v\pi))\big)$. On a $u'=\pm u$ et $v'=v$, c'est à dire $(u,v)\eqCT(u',v')$.
    \item Enfin si $u\not\in\{0,\pm\frac{1}{2},\pm1\}$, alors $x\neq0$. On peut calculer $\frac{y}{x}=\tan(u\pi)$, ce qui nous permet directement d'en déduire que $u'\pi=\atan2(y,x)=u\pi$.

    Intéressons-nous à la seconde composante de $g_t$. On a : \[\frac{y}{\sin(\atan2(y,x))}-R=\frac{\big(R+r\cos(v\pi)\big)\sin(u\pi)}{\sin(u\pi)}-R=r\cos(v\pi),\]dont on peut en déduire le résultat suivant : $$g_t(x,y,z)=(u,\atan2\big(r\sin(v\pi),r\cos(v\pi))\big)=(u,v).$$Nous pouvons alors en déduire que $(u,v)=(u',v')$.
\end{itemize}
Nous pouvons alors en conclure que $g_t$ est bien une inverse à droite de l'application $t$.
\end{proof}

\begin{lemma}\label{lemma:torus-inv-continuous}
L'application $\pi_t\circ g_t$ est l'inverse de $\overline{t}$, et elle est continue.
\end{lemma}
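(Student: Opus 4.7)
The overall strategy mirrors that of Lemma~\ref{lemma:mobius-inv-continuous}: first I would establish that $\pi_t \circ g_t$ is a right inverse of $\overline{t}$, then invoke Proposition~\ref{prop:inj+inv=bij} to upgrade this to a two-sided inverse, and finally verify continuity by showing that every discontinuity of $\atan2$ appearing in the formula for $g_t$ is absorbed by the canonical projection $\pi_t$.

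For the first step, Proposition~\ref{prop:torus-reciproque} gives $t \circ g_t = \mathrm{id}_T$; since $\overline{t} \circ \pi_t = t$, we deduce $\overline{t} \circ (\pi_t \circ g_t) = \mathrm{id}_T$, so $\pi_t \circ g_t$ is a right inverse of $\overline{t}$. Proposition~\ref{prop:torus-relation} combined with the universal property (Theorem~\ref{th:quotient}) ensures that $\overline{t}$ is injective, hence Proposition~\ref{prop:inj+inv=bij} yields that $\overline{t}$ is bijective with two-sided inverse $\pi_t \circ g_t$.

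For continuity I would treat the two components of $g_t$ separately. The first component $\frac{1}{\pi}\atan2(y,x)$ is discontinuous exactly on $\{y = 0,\, x < 0\}$, which corresponds to the points $t(\pm 1, v) \in T$; since $\eqCT$ identifies $(1,v)$ with $(-1,v)$, the jump between $+\pi$ and $-\pi$ becomes an equality of classes in $C_T$ and the composition with $\pi_t$ is continuous. For the second component, within case C the parametrization gives the identity $\frac{y}{\sin(\atan2(y,x))} - R = r\cos(v\pi)$ on $T$, so the outer $\atan2$ evaluates to $\atan2(r\sin(v\pi), r\cos(v\pi))$, whose only discontinuity occurs at $v = \pm 1$; the classes of $(u, 1)$ and $(u, -1)$ coincide in $C_T$, so this jump is again absorbed by $\pi_t$.

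The main technical obstacle, as in the Möbius case, is verifying that the three branches of the second component match at the transitions $\atan2(y,x) \to 0$ and $\atan2(y,x) \to \pi$. The crucial asymptotic is $\frac{y}{\sin(\atan2(y,x))} \to x$ as $y \to 0$ with $x > 0$, and $\to -x$ as $y \to 0$ with $x < 0$; with these, the argument of the outer $\atan2$ in case C converges to $x - R$ (resp.\ $-x - R$), which is exactly what appears in case A (resp.\ case B). This limit follows from Proposition~\ref{prop:sin-arc} applied to the definition of $\atan2$, together with a first-order expansion of $\sqrt{1 + (y/x)^2}$, and is the only genuinely delicate computation of the proof.
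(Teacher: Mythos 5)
Votre démonstration est correcte et suit essentiellement la même démarche que celle de l'article : inverse à droite via la Proposition~\ref{prop:torus-reciproque}, absorption des discontinuités de $\atan2$ par la surjection $\pi_t$, et le calcul clé de la limite $\frac{y}{\sin(\atan2(y,x))}\to|x|$ (vos valeurs $x-R$ et $-x-R$ selon le signe de $x$ coïncident avec le $|x|-R$ de l'article) obtenue par la Proposition~\ref{prop:sin-arc}.
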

\begin{proof}
On peut, dans un premier temps, voir que l'on a : \[\overline{t}\circ\pi_t\circ g_t=t\circ g_t=id_T.\]Cela montre que l'application $\overline{t}$ admet $\pi_t\circ g_t$ comme inverse à droite. 

\bigskip Montrons désormais que l'application $\pi_t\circ g_t$ est continue. Tout d'abord, comme pour le lemme \ref{lemma:mobius-inv-continuous}, la première composante de $g_t$ devient continue avec la composition avec $\pi_t$.

Montrons la continuité de la deuxième composante. Si $\atan2(y,x)\neq 0$, cette composante est donnée par $\atan2\left(z,\frac{y}{sin(\atan2(y,x))}-R\right)$. De part ce que l'on vient de voir, cette application est bien continue lorsque l'on compose avec la surjection $\pi_t$. Il nous reste alors à montrer que cette application tende bien vers~$\atan2(z,x-R)$ lorsque $\atan2(y,x)$ tend vers 0, et vers $\atan2(z,-x-R)$ lorsque $\atan2(y,x)$ tend vers $\pi$. Autrement dit, il nous suffit de montrer que lorsque $y$ tend vers 0, $\frac{y}{\sin(\atan2(y,x))}$ tend vers $|x|$.

\bigskip Comme nous nous intéressons à la limite lorsque $y$ tend vers 0, on peut alors supposer $x$ non nul. Dans ce cas, lorsque $x>0$, atan2 est égal à l'arctan, et lorsque $x<0$, il faut lui rajouter une rotation de $\pm\pi$. Dans ce cas, nous pouvons remarquer que l'on a :\[\begin{split}
\sin(\atan2(y,x))&=\sin\left(\arctan\left(\frac{y}{x}\right)\pm\pi\right)\\
&=\sin\left(\arctan\left(\frac{y}{x}\right)\right)\cos(\pi)\pm\cos\left(\arctan\left(\frac{y}{x}\right)\right)\sin(\pi)\\
&=-\sin\left(\arctan\left(\frac{y}{x}\right)\right)\\
&=\sin\left(\arctan\left(-\frac{y}{x}\right)\right)\\
&=\sin\left(\arctan\left(\frac{y}{|x|}\right)\right).
\end{split}\]On peut alors écrire que $\sin(\atan2(y,x))=\sin(\arctan(\frac{y}{|x|}))$ pour $x\neq0$. Par la suite, en utilisant la formule de la proposition \ref{prop:sin-arc}, on peut obtenir les égalités suivantes :\[\begin{split}
\sin(\atan2(y,x))&=\sin\left(\arctan\left(\frac{y}{|x|}\right)\right)\\
&=\frac{y}{|x|}\frac{1}{\sqrt{1+(\frac{y}{|x|})^2}}\\
&=\frac{y}{|x|}\frac{|x|}{\sqrt{x^2+y^2}}\\
&=\frac{y}{\sqrt{x^2+y^2}}.
\end{split}\]En utilisant le développement limité de $r=\sqrt{x^2+y^2}$, qui vaut $|x|+\frac{y^2}{|x|}+o(y^3)$, on peut alors en déduire : \[\begin{split}
\frac{y}{\sin(\atan2(y,x))}&=\frac{y}{\frac{y}{\sqrt{x^2+y^2}}}+o(y^3)\\
&=\sqrt{x^2+y^2}+o(y^3)\\
&=|x|+\frac{y^2}{|x|}+o(y^3).
\end{split}\] 

On peut alors en conclure que la limite de~$\frac{y}{\sin(\atan2(y,x))}$ vaut bien $|x|$ lorsque~$y$ tend vers 0.

Nous pouvons ainsi conclure que l'application $\pi_t\circ g_t$ est continue sur $T$.
\end{proof}

\begin{theorem}\label{th:torus-homeo}
L'espace quotient $C_T$ et le sous espace $T$ de $\bb{R}^3$ sont homéomorphes.
\end{theorem}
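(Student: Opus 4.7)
The plan is to mirror exactly the argument used for the Möbius strip in Theorem \ref{th:mobius-homeo}, since all the pieces have already been assembled in the preceding propositions and lemma. The strategy is to produce $\overline{t}$ as the quotient of $t$, show it is injective and continuous via the universal property, then exhibit a continuous right inverse and invoke Proposition \ref{prop:inj+inv=bij} to upgrade the situation to a bicontinuous bijection.

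First I would invoke Proposition \ref{prop:torus-relation}, which establishes the equivalence $(u,v)\eqCT(u',v')\Leftrightarrow t(u,v)=t(u',v')$. The forward direction shows that $t$ is compatible with $\eqCT$, so the universal property of the topological quotient (Théorème \ref{th:quotient}) guarantees a unique continuous map $\overline{t}:C_T\to T$ such that $t=\overline{t}\circ\pi_t$. The reverse direction of Proposition \ref{prop:torus-relation} is precisely the hypothesis needed for the second part of Théorème \ref{th:quotient}, which then yields the injectivity of $\overline{t}$.

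Next I would appeal to Lemma \ref{lemma:torus-inv-continuous}, which already packages the two remaining facts: that $\pi_t\circ g_t$ is a right inverse of $\overline{t}$, and that this map is continuous on $T$. Combining injectivity of $\overline{t}$ with the existence of the right inverse $\pi_t\circ g_t$, Proposition \ref{prop:inj+inv=bij} forces $\overline{t}$ to be bijective with $\pi_t\circ g_t$ as its two-sided inverse.

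Finally I would conclude by noting that $\overline{t}$ is a continuous bijection whose inverse $\pi_t\circ g_t$ is also continuous, which is precisely the definition of a homeomorphism between $C_T$ and $T$. There is no real obstacle here since all technical difficulties, in particular the verification that $g_t$ is a right inverse (Proposition \ref{prop:torus-reciproque}) and the delicate continuity analysis around the branches of $\atan2$ at $y=0$, have been handled in the preceding lemma; this final theorem is simply the assembly step.
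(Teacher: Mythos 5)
Your proposal is correct and follows exactly the same route as the paper's proof: Proposition \ref{prop:torus-relation} plus the universal property (Théorème \ref{th:quotient}) give injectivity and continuity of $\overline{t}$, Lemme \ref{lemma:torus-inv-continuous} supplies the continuous right inverse $\pi_t\circ g_t$, and Proposition \ref{prop:inj+inv=bij} assembles these into a homeomorphism. There is nothing to add.
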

\begin{proof}
En utilisant la proposition \ref{prop:torus-relation} ainsi que la propriété universelle quotient, nous pouvons dire que l'application $\overline{t}$ est injective et continue. Dans le lemme \ref{lemma:torus-inv-continuous}, nous avons montré que $\pi_t\circ g_t$ était l'inverse à droite de $\overline{m}$, et qu'elle était elle aussi continue. Finalement, avec la proposition \ref{prop:inj+inv=bij}, nous pouvons en déduire que $\overline{t}$ est une application bijective et continue. Son inverse, étant $\pi_t\circ g_t$, est également continue, ce qui nous permet de dire que $\overline{t}$ est un homéomorphisme entre l'espace quotient $C_T$ et la surface $T$ paramétrée de $\bb{R}^3$.
\end{proof}

\section{Le plan projectif réel}

\subsection{Polygone fondamental}

Le dernier espace est celui que l'on appelle le \emph{plan projectif réel}, que l'on appellera par la suite simplement plan projectif. C'est un complexe cellulaire sur lequel on identifie chaque côté d'un carré par son opposé dans le sens inverse, comme sur la figure suivante.

\begin{figure}[H]
    \centering
    \includegraphics[width=0.27\linewidth]{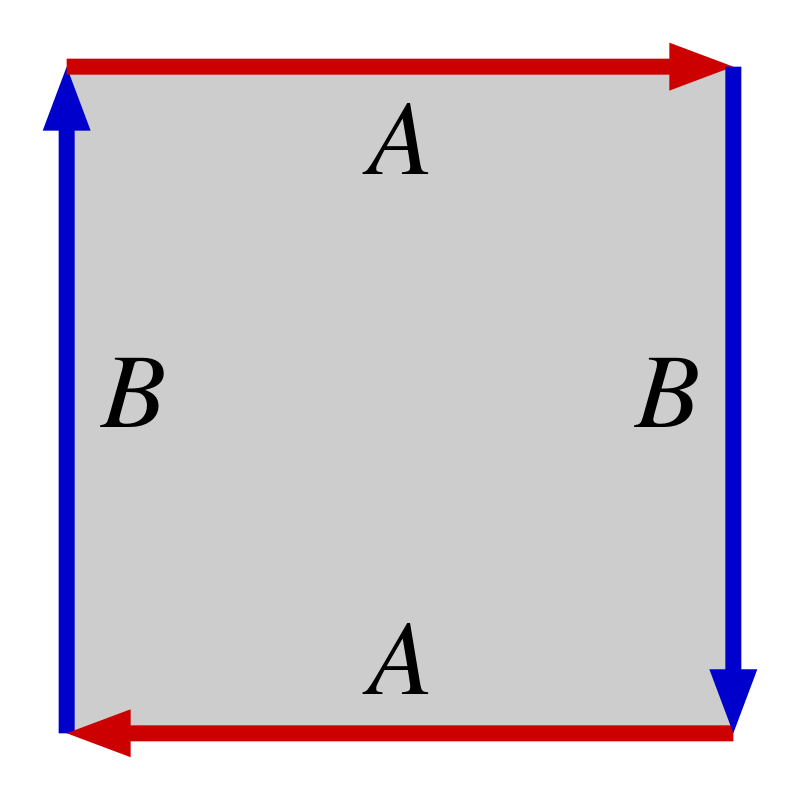}
    \caption{Polygone fondamental du plan projectif}
    \label{fig:P2R-as-square}
\end{figure}

\begin{definition}
On considère le carré $[-1,1]^2$, ainsi qu'une relation d'équivalence  définie sur celui-ci, par : \[\forall(x,y),(x',y')\in [-1,1]^2,\quad (x,y)\eqCP(x',y')\Leftrightarrow \left\{\begin{matrix}
(x',y')=\pm(x,y)&\text{si }x=\pm1\\ 
(x',y')=\pm(x,y)&\text{si }y=\pm1\\ 
(x',y')=(x,y)&\text{sinon}.
\end{matrix}\right.\]On définit ainsi l'espace quotient $C_P:=[-1,1]^2/\eqCP$.
\end{definition}

\subsubsection{Homéomorphisme à l'hémisphère}

On considère la sphère $\s{2}$, que nous allons quotienter par la relation de symétrie par rapport à l'origine. Nous obtenons alors l'hémisphère $S=\s{2}/\eqSym$. Cet ensemble peut être perçu comme une demi-sphère (\ref{fig:sphere-P2R}), sur laquelle on identifie les points antipodaux du cercle. Nous noterons $[x,y,z]$ un élément de $S$, qui est la classe d'équivalence de $(x,y,z)\in \s{2}$.
\begin{figure}[H]
    \centering
    \includegraphics[width=0.2\linewidth]{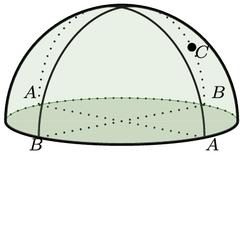}
    \caption{Demi-sphère $S=\s{2}/\eqSym$}
    \label{fig:sphere-P2R}
\end{figure}

\begin{proposition}\label{homeo-S-CP}
Les espaces quotients $C_P$ et $S$ sont homéomorphes.
\end{proposition}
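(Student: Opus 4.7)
L'idée est de passer par le disque unité fermé $D^2$, homéomorphe à la fois au carré $[-1,1]^2$ (par une bijection radiale envoyant le bord sur le cercle unité) et à l'hémisphère nord fermé de $\s{2}$ (par relèvement vertical), en faisant correspondre à chaque étape les identifications antipodales du bord.

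Je construirais d'abord l'application radiale $h:[-1,1]^2\to D^2$ définie par $h(x,y)=\frac{\max(|x|,|y|)}{\sqrt{x^2+y^2}}(x,y)$ pour $(x,y)\neq(0,0)$ et $h(0,0)=(0,0)$. On vérifie que $\|h(x,y)\|=\max(|x|,|y|)$, ce qui donne sa continuité à l'origine et montre qu'elle envoie le bord du carré sur le cercle unité ; de plus $h(-x,-y)=-h(x,y)$, donc les antipodes du bord sont préservés, et un raisonnement analogue fournit son inverse continu. En composant avec le relèvement $\psi:D^2\to\s{2}$, $\psi(a,b)=\big(a,b,\sqrt{1-a^2-b^2}\big)$, puis la surjection canonique $\pi_S:\s{2}\to S$, j'obtiens une application continue $f:=\pi_S\circ\psi\circ h$ de $[-1,1]^2$ dans $S$. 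On démontre l'équivalence $f(x,y)=f(x',y')\Leftrightarrow(x,y)\eqCP(x',y')$ en distinguant, dans le sens réciproque, selon que $\psi\circ h(x,y)$ et $\psi\circ h(x',y')$ sont égaux dans $\s{2}$ (auquel cas $(x,y)=(x',y')$) ou antipodaux (auquel cas l'égalité des troisièmes composantes opposées force $\max(|x|,|y|)=1$, plaçant les deux points sur le bord en position antipodale). Le Théorème~\ref{th:quotient} fournit alors $\overline{f}:C_P\to S$ continue et injective.

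Pour l'inverse à droite, je définirais $\tilde{g}:\s{2}\to C_P$ par recollement : $\tilde{g}(a,b,c)=\pi_P\big(h^{-1}(a,b)\big)$ si $c\geq 0$ et $\tilde{g}(a,b,c)=\pi_P\big(h^{-1}(-a,-b)\big)$ si $c\leq 0$, en notant $\pi_P$ la surjection canonique associée à $\eqCP$. La cohérence sur l'équateur provient de ce que $h^{-1}(a,b)$ et $h^{-1}(-a,-b)=-h^{-1}(a,b)$ y sont antipodaux sur le bord du carré, donc identifiés dans $C_P$ ; la continuité découle du lemme de recollement sur les fermés $\{c\geq 0\}$ et $\{c\leq 0\}$. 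L'application $\tilde{g}$ étant compatible avec $\eqSym$, elle induit $\overline{g}:S\to C_P$ continue, et un calcul direct donne $\overline{f}\circ\overline{g}=\mathrm{id}_S$. La Proposition~\ref{prop:inj+inv=bij} permet alors de conclure que $\overline{f}$ est un homéomorphisme d'inverse $\overline{g}$. Le principal obstacle attendu est la gestion rigoureuse du recollement définissant $\tilde{g}$ sur l'équateur, les autres étapes se ramenant à des calculs directs ou à l'application des résultats préliminaires.
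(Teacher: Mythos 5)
Votre proposition est correcte et suit essentiellement la même démarche que l'article : votre composée $\psi\circ h$ coïncide terme à terme avec l'application $f$ du texte (mêmes facteurs $\frac{\|(x,y)\|_\infty}{\|(x,y)\|_2}$ sur les deux premières coordonnées et même troisième coordonnée $\sqrt{1-\|(x,y)\|_\infty^2}$), simplement factorisée par le disque $D^2$. La seule variante est de présentation : vous obtenez la continuité de l'inverse par recollement sur les deux hémisphères fermés $\{c\geq0\}$ et $\{c\leq0\}$, là où l'article écrit une formule unique avec le facteur $\varepsilon(z)$ — votre traitement de l'équateur est d'ailleurs un peu plus explicite que celui du texte.
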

\begin{proof}
Montrons qu'il existe deux applications réciproques l'une de l'autre, permettant de passer d'un espace à un autre.

Dans un premier temps, on s'intéresse à l'application définie par : \[\begin{split}
    f:[-1,1]^2&\longrightarrow S\\
    (x,y)&\mapsto\left\{\begin{matrix}
    [0,0,1]&\text{si } (x,y)=(0,0)\\
\left[\frac{\left \|(x,y)  \right \|_\infty}{\left \| (x,y) \right \|_2}x,\frac{\left \|(x,y)  \right \|_\infty}{\left \| (x,y) \right \|_2}y,\sqrt{1-\left \|(x,y)  \right \|_\infty^2}\right]&\text{sinon.}
\end{matrix}\right.
\end{split}\]

Montrons que l'application est bien définie. Soit $(x,y)\in[-1,1]^2$. On a : \[\left(\frac{\left \|(x,y)  \right \|_\infty}{\left \| (x,y) \right \|_2}x\right)^2+\left(\frac{\left \|(x,y)  \right \|_\infty}{\left \| (x,y) \right \|_2}y\right)^2=\left \| (x,y) \right \|_\infty^2\frac{x^2+y^2}{\sqrt{x^2+y^2}^2}=\left \| (x,y) \right \|_\infty^2.\]On peut alors en déduire que le point $f(x,y)$ appartient à la sphère $\s{2}$, étant solution de l'équation cartésienne de celle-ci.

De plus, on a, pour $y\in[-1,1]$ :\[\begin{split}
f(-1,y)&=\left[\frac{-1}{\sqrt{1+y^2}},\frac{y}{\sqrt{1+y^2}},\sqrt{1-1}\right]\\
&=\left[\frac{-1}{\sqrt{1+y^2}},\frac{y}{\sqrt{1+y^2}},0\right]\\
&=\left[\frac{1}{\sqrt{1+y^2}},\frac{-y}{\sqrt{1+y^2}},0\right]\quad\text{ avec la relation de symétrie sur }S\\
&=f(1,-y).
\end{split}\]De la même manière pour $x\in[-1,1]$, on a $f(x,-1)=f(-x,1)$, ce qui nous permet de conclure que l'application est compatible avec la relation $\eqCP$. De plus, l'application $f$ est continue sur $[-1,1]^2\setminus\{(0,0)\}$, par produit et combinaisons d'applications continues. Il reste à montrer la continuité en $(0,0)$. Il est évident que la limite de la troisième composante de $f$ lorsque $(x,y)$ tend vers $(0,0)$ est bien 1. Montrons maintenant que $\lim_{(x,y)\rightarrow(0,0)}\frac{||(x,y)||_\infty}{||(x,y)||_2}x=0$. En supposant~$||(x,y)||_\infty=|x|$, nous avons le résultat que suit : \[\left|\frac{||(x,y)||_\infty}{||(x,y)||_2}x\right|=\frac{x^2}{\sqrt{x^2+y^2}}\leq\frac{x^2}{|x|}\underset{(x,y)\rightarrow(0,0)}{\longrightarrow}0.\]

En supposant cette fois $||(x,y)||_\infty=y$, nous avons le résultat suivant : \[\left|\frac{||(x,y)||_\infty}{||(x,y)||_2}x\right|=\frac{|xy|}{\sqrt{x^2+y^2}}\leq\frac{|xy|}{|x|}\underset{(x,y)\rightarrow(0,0)}{\longrightarrow}0.\]Nous pouvons alors en conclure que le terme $\frac{||(x,y)||_\infty}{||(x,y)||_2}x$ tend vers 0 lorsque $(x,y)$ tend vers $(0,0)$. La limite de $\frac{||(x,y)||_\infty}{||(x,y)||_2}y$ se traite de la même manière, et tend également vers 0 lorsque $(x,y)$ tend vers $(0,0)$. Nous pouvons alors en conclure que l'application $f$ est continue en $(0,0)$. Ainsi, l'application $f$ est bien continue.

\bigskip En utilisant la propriété universelle du quotient \ref{th:quotient}, on en déduit qu'il existe une unique application continue $\overline{f}$ définie par : \[\begin{split}
\overline{f}:C_P&\longrightarrow S\\
[x,y]&\mapsto\left\{\begin{matrix}
    [0,0,1]&\text{si } (x,y)=(0,0)\\
\left[\frac{\left \|(x,y)  \right \|_\infty}{\left \| (x,y) \right \|_2}x,\frac{\left \|(x,y)  \right \|_\infty}{\left \| (x,y) \right \|_2}y,\sqrt{1-\left \|(x,y)  \right \|_\infty^2}\right]&\text{sinon.}
\end{matrix}\right.
\end{split}\]Nous voulons trouver une application continue inverse à celle-ci, permettant de conclure que les espaces sont bien homéomorphes.
Dans un second temps, on s'intéresse à l'application définie par : \[\begin{split}
g:\s{2}&\longrightarrow C_P\\
(x,y,z)&\mapsto\left\{\begin{matrix}
    [0,0]&\text{si } (x,y,z)=(0,0,1)\\
\left[\varepsilon(z)\frac{\left \| (x,y) \right \|_2}{\left \| (x,y) \right \|_\infty}x,\varepsilon(z)\frac{\left \| (x,y) \right \|_2}{\left \| (x,y) \right \|_\infty}y\right]&\text{sinon.}
\end{matrix}\right.
\end{split}\]avec $\varepsilon:x\mapsto\pm1,  \varepsilon(0)=1$ l'application qui retourne le signe de $x$. Montrons que cette application est compatible avec la relation de symétrie. Tout d'abord, nous allons traiter séparément le cas où $z=0$. 

Soit $(x,y,0)\in S$. Dans cette configuration, on a $||(x,y)||_2=x^2+y^2=1$, ce qui nous donne : \[g(x,y,0)=\left[\frac{x}{\left \| (x,y) \right \|_\infty},\frac{y}{\left \| (x,y) \right \|_\infty}\right].\]Or, l'image du point antipodal nous donne : \[g(-x,-y,0)=\left[\frac{-x}{\left \| (-x,-y) \right \|_\infty},\frac{-y}{\left \| (-x,-y) \right \|_\infty}\right]=\left[\frac{-x}{\left \| (x,y) \right \|_\infty},\frac{-y}{\left \| (x,y) \right \|_\infty}\right].\] Du fait que $||(x,y)||_\infty$ vaut $|x|$ ou $|y|$, on aura un terme qui vaudra $\pm1$, appartenant donc au bord du carré $C_P$. On en déduit que $g(-x,-y,0)=g(x,y,0)$, du fait que $\left(\frac{x}{\left \| (x,y) \right \|_\infty},\frac{y}{\left \| (x,y) \right \|_\infty}\right)\eqSym\left(\frac{-x}{\left \| (x,y) \right \|_\infty},\frac{-y}{\left \| (x,y) \right \|_\infty}\right)$.

Traitons désormais le cas général. Soit $(x,y,z)\in S$, on a :\[\begin{split}
g(-x,-y,-z)&=\left[\varepsilon(-z)\frac{\left \| (-x,-y) \right \|_2}{\left \| (-x,-y) \right \|_\infty}(-x),\varepsilon(-z)\frac{\left \| (-x,-y) \right \|_2}{\left \| (-x,-y) \right \|_\infty}(-y)\right]\\
&=\left[-\varepsilon(z)\frac{\left \| (x,y) \right \|_2}{\left \| (x,y) \right \|_\infty}(-x),-\varepsilon(z)\frac{\left \| (x,y) \right \|_2}{\left \| (x,y) \right \|_\infty}(-y)\right]\\
&=\left[\varepsilon(z)\frac{\left \| (x,y) \right \|_2}{\left \| (x,y) \right \|_\infty}x,\varepsilon(z)\frac{\left \| (x,y) \right \|_2}{\left \| (x,y) \right \|_\infty}y\right]\\
&=g(x,y,z).
\end{split}\]

Montrons que cette application est continue. Il est clair qu'elle est continue sur $\s{2}\setminus\{(0,0,1)\}$. Il reste à montrer qu'elle l'est également en $(0,0,1)$. Nous allons étudier la limite de $\varepsilon(z)\frac{\left \| (x,y) \right \|_2}{\left \| (x,y) \right \|_\infty}x$ lorsque $(x,y,z)$ tend vers $(0,0,1)$.

Supposons que $||(x,y)||_\infty=|x|$. Dans ce cas, nous avons le résultat suivant : \[\left|\varepsilon(z)\frac{\left \| (x,y) \right \|_2}{\left \| (x,y) \right \|_\infty}x\right|=\frac{\sqrt{x^2+y^2}}{|x|}|x|=\sqrt{x^2+y^2}\underset{(x,y,z)\to(0,0,1)}{\longrightarrow}0.\]Supposons cette fois que $||(x,y)||_\infty=|y|$. Dans ce cas, nous avons le résultat suivant : \[\left|\varepsilon(z)\frac{\left \| (x,y) \right \|_2}{\left \| (x,y) \right \|_\infty}x\right|=\frac{\sqrt{x^2+y^2}}{|y|}|x|\leq\frac{\sqrt{x^2+y^2}}{|y|}|y|=\sqrt{x^2+y^2}\underset{(x,y,z)\to(0,0,1)}{\longrightarrow}0.\]De manière analogue, on peut montrer que la deuxième composante $\varepsilon(z)\frac{\left \| (x,y) \right \|_2}{\left \| (x,y) \right \|_\infty}y$ tend vers 0 lorsque $(x,y,z)$ tend vers $(0,0,1)$. On peut alors en conclure que l'application $g$ est continue.

\bigskip L'application $g$ est donc continue et compatible avec la relation de symétrie, qui nous permet d'affirmer l'existence d'une unique application continue $\overline{g}$, définie par : \[\begin{split}
\overline{g}:S&\longrightarrow C_P\\
[x,y,z]&\mapsto\left\{\begin{matrix}
    [0,0]&\text{si } (x,y,z)=(0,0,1)\\
\left[\varepsilon(z)\frac{\left \| (x,y) \right \|_2}{\left \| (x,y) \right \|_\infty}x,\varepsilon(z)\frac{\left \| (x,y) \right \|_2}{\left \| (x,y) \right \|_\infty}y\right]&\text{sinon.}
\end{matrix}\right.
\end{split}\]

Enfin, montrons que les applications $\overline{f}$ et $\overline{g}$ sont réciproques l'une de l'autre. Soit~$[x,y]\in C_P$, on a : \[\begin{split}
\overline{g}\circ\overline{f}\big([x,y]\big)&=\left[\varepsilon(z)\frac{\left \| (x,y) \right \|_2}{\left \| (x,y) \right \|_\infty}\frac{\left \| (x,y) \right \|_\infty}{\left \| (x,y) \right \|_2}x,\varepsilon(z)\frac{\left \| (x,y) \right \|_2}{\left \| (x,y) \right \|_\infty}\frac{\left \| (x,y) \right \|_\infty}{\left \| (x,y) \right \|_2}y\right]\\
&=\big[\varepsilon(z)x,\varepsilon(z)y]\\
&=\big[x,y\big].
\end{split}\]avec $z=\sqrt{1-\left \|(x,y)  \right \|_\infty^2}$.

Soit $(x,y,z)\in S$, on a : \[\begin{split}
\overline{f}\circ\overline{g}\big([x,y,z]\big)&=\left[\varepsilon(z)x,\varepsilon(z)y,\sqrt{1-\left \| \left( \varepsilon(z)\frac{\left \| (x,y) \right \|_2}{\left \| (x,y) \right \|_\infty}x,\varepsilon(z)\frac{\left \| (x,y) \right \|_2}{\left \| (x,y) \right \|_\infty}y\right)\right\|_\infty^2}\right]\\
&=\left[\varepsilon(z)x,\varepsilon(z)y,\sqrt{1-\varepsilon(z)^2\frac{\left \| (x,y) \right \|_2^2}{\left \| (x,y) \right \|_\infty^2}\left \|(x,y)  \right \|_\infty^2}\right]\\
&=\left[\varepsilon(z)x,\varepsilon(z)y,\sqrt{1-\left \| (x,y) \right \|_2^2}\right]\\
&=[\varepsilon(z)x,\varepsilon(z)y,|z|]\\
&=[\varepsilon(z)x,\varepsilon(z)y,\varepsilon(z)z]\\
&=[x,y,z].
\end{split}\]Dès lors, nous pouvons en conclure que les applications $\overline{f}$ et $\overline{g}$ sont réciproques l'une de l'autre et continues, ce qui fait que $S$ et $C_P$ sont homéomorphes.
\end{proof}

\subsection{Plongement dans l'espace euclidien}

L'objectif de cette partie est de montrer que le plan projectif peut être représenté en tant que sous-espace dans l'espace euclidien à quatre dimensions.

\begin{definition}
On considère la surface paramétrée suivante à partir d'éléments de la sphère $\s{2}$ : \begin{equation}
\left\{\begin{matrix}
u(x,y,z)&=&xy\\ 
v(x,y,z)&=&xz\\ 
w(x,y,z)&=&y^2-z^2\\ 
t(x,y,z)&=&2yz.
\end{matrix}\right.
\end{equation}On note $p:(x,y,z)\in\s{2}\rightarrow \bb{R}^4$ l'application continue paramétrant la surface. On notera le sous-espace contenant tout les points comme suit : $$P:=im(p)=\{u(x,y,z), v(x,y,z), w(x,y,z), t(x,y,z),(x,y,z)\in~\s{2}\}.$$
\end{definition}

\begin{proposition}\label{prop:projective-relation}
L'application $p$ vérifie l'équivalence suivante : \begin{equation*}
\forall(x,y,z),(x',y',z')\in\s{2},\quad (x,y,z)\eqSym(x',y',z')\Leftrightarrow p(x,y,z)=p(x',y',z').
\end{equation*}
\end{proposition}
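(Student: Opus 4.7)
Le plan est d'établir séparément les deux implications de l'équivalence.

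L'implication directe $(\Rightarrow)$ serait immédiate : si $(x',y',z') = \pm(x,y,z)$, chaque composante de $p$ étant un polynôme homogène de degré 2 en $(x,y,z)$, on vérifie directement que $p(-x,-y,-z) = p(x,y,z)$.

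Pour l'implication réciproque $(\Leftarrow)$, je partirais de l'hypothèse $p(x,y,z) = p(x',y',z')$, qui fournit les quatre égalités
\[xy = x'y',\quad xz = x'z',\quad y^2 - z^2 = y'^2 - z'^2,\quad yz = y'z'.\]
L'étape clé, et probablement la plus astucieuse du raisonnement, consiste à reconnaître dans les deux dernières équations les parties réelle et imaginaire de $(y+iz)^2$. En effet, $(y+iz)^2 = (y^2-z^2) + 2i\,yz$, si bien que $(y+iz)^2 = (y'+iz')^2$, et donc $y+iz = \pm(y'+iz')$, ce qui fournit $(y',z') = \varepsilon(y,z)$ pour un certain $\varepsilon \in \{-1,+1\}$.

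Il ne resterait plus qu'à transférer ce signe sur la coordonnée $x$. Si $(y,z) \neq (0,0)$, je supposerais par exemple $y \neq 0$ ; la première équation se réécrit alors $xy = \varepsilon\, x' y$, ce qui force $x' = \varepsilon x$, d'où $(x',y',z') = \varepsilon(x,y,z)$. Dans le cas résiduel $(y,z) = (0,0)$, la contrainte d'appartenance à $\s{2}$ impose $(y',z') = (0,0)$ et $x^2 = x'^2 = 1$, donc $x' = \pm x$. Dans tous les cas, $(x,y,z) \eqSym (x',y',z')$. L'obstacle principal me semble être l'identification de la bonne combinaison entre les deux dernières équations (via le carré complexe, ou de manière équivalente en remarquant que $(y^2-z^2)^2+(2yz)^2 = (y^2+z^2)^2$, puis en comparant avec $(y^2-z^2)$) ; la distinction de cas finale est ensuite routinière.
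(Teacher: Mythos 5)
Votre démonstration est correcte, mais elle emprunte pour l'implication réciproque une voie sensiblement différente de celle de l'article. L'article procède par une longue disjonction de cas~: il établit d'abord que l'annulation de chaque coordonnée est préservée ($x=0\Leftrightarrow x'=0$, etc.), puis traite séparément les cas où deux coordonnées sont nulles, où une seule l'est, et enfin le cas générique où aucune ne s'annule, conclu via l'égalité des rapports $\frac{x}{x'}=\frac{y'}{y}=\frac{z'}{z}=\dots$ et un argument de colinéarité. Votre idée de lire les deux dernières équations comme les parties réelle et imaginaire de $(y+iz)^2=(y'+iz')^2$, d'où $y'+iz'=\pm(y+iz)$ et donc $(y',z')=\varepsilon(y,z)$ avec $\varepsilon=\pm1$, court-circuite presque toute cette casuistique~: il ne reste qu'à propager le signe sur $x$ au moyen de $xy=x'y'$ ou de $xz=x'z'$ (selon que $y\neq0$ ou $z\neq0$), puis à régler le cas dégénéré $(y,z)=(0,0)$ grâce à l'équation de la sphère, qui donne $x^2=x'^2=1$. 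Notez au passage que votre argument absorbe le cas $x=0$ sans traitement particulier, puisque $xy=\varepsilon x'y$ avec $y\neq0$ donne encore $x'=\varepsilon x$. En résumé, votre approche est plus courte et plus conceptuelle, celle de l'article plus élémentaire mais nettement plus laborieuse~; l'implication directe, elle, est identique dans les deux preuves (homogénéité quadratique des quatre composantes de $p$).
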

\begin{proof}
Dans un premier temps, montrons l'implication $(\Rightarrow)$. Les mônomes étant tout quadratiques, le signe des termes ne change rien. Alors nous pouvons directement en déduire que pour tout élément $(x,y,z)$ de la sphère $\s{2}$, on a $p(-x,-y,-z)=p(x,y,z)$.

\bigskip Montrons désormais la réciproque $(\Leftarrow)$. Soient $(x,y,z),(x',y',z')\in \s{2}$, tels que $p(x,y,z)=p(x',y',z')$. Montrons alors que $(x,y,z)\eqSym(x',y',z')$. On a le système d'équations suivant : \begin{equation*}
\left\{\begin{matrix}
xy=x'y'\\ 
xz=x'z'\\ 
y^2-z^2=y'^2-z'^2\\ 
2yz=2y'z'.
\end{matrix}\right.
\end{equation*}
Montrons d'abord par l'absurde que l'un des termes $x,y$ ou $z$ vaut 0 si et seulement si le terme correspondant $x',y'$ ou $z'$ vaut également 0.\begin{itemize}
    \item Supposons $x=0$ et $x'\neq0$. Alors, on a $x'y'=0$ et $x'z'=0$, dont on en déduit $y'=0$ et $z'=0$. Ainsi, on en déduit par la dernière équation que~$y=0$ ou $z=0$, et par la troisième que $y=z=0$. Finalement, on aboutit à $(x,y,z)=(0,0,0)$, un point n'appartenant pas à la sphère. Nous avons une contradiction. De manière symétrique, on montre que $x'=0$ et~$x\neq0$ est impossible.

    On en déduit que $x=0\Leftrightarrow x'=0$.
    \item Supposons $y=0$ et $y'\neq0$. La première et la dernière équations implique que $x'=z'=0$. Dès lors, on obtient par la troisième équation~$-z^2=y'^2$, ce qui est absurde puisque $-z^2$ ne peut être strictement positif. De manière symétrique, on démontre que $y'=0$ et $y\neq0$ est impossible. Puis de manière analogue, on peut démontrer de même pour $z$.

    Finalement, on obtient $y=0\Leftrightarrow y'=0$ et $z=0\Leftrightarrow z'=0$.
\end{itemize}

Ensuite, nous allons traiter le cas où deux des trois composante est nulle. Supposons que c'est $x$ qui est non nulle. Alors, par ce qui précède, on a~$x'\neq0$. L'équation de la sphère nous permet de dire que~$x^2=1=x'^2$, ce qui veut dire que $x'=\pm x$. Autrement dit, on a~$(x',0,0)=\pm(x,0,0)$, ce qui revient à dire $(x,0,0)\eqSym(x',0,0)$. Les deux autres cas se traite de la même manière.

\bigskip Désormais, nous allons traiter le cas où l'une des composante est nulle : \begin{itemize}
    \item Supposons $x=0$, ainsi que $y\neq0, z\neq0$. Cela veut dire aussi que $x'=0$, et $y'\neq0,z'\neq0$. Il nous reste alors les équations $y^2-z^2=y'^2-z'^2$ et~$yz=y'z'$, ainsi que celle de la sphère $y^2+z^2=1=y'^2+z'^2$. En sommant les deux équations faisant intervenir un carré, nous obtenons~$2y^2=2y'^2$, c'est à dire $y'=\pm y$. Si $y'=y$, alors $yz=y'z'=yz'$, ce qui veut dire que~$z=~z'$. Autrement dit, on a $(0,y',z')=(0,y,z)$. Dans le cas où~$y'=~-y$, alors~$yz=y'z'=-yz'$, c'est à dire $z'=-z$. Ainsi, on a $(0,y',z')=-(0,y,z)$.

    Dans tout les cas, on a $(0,y,z)\eqSym(0,y',z')$.
    \item Supposons $y=0$, et $x\neq0,z\neq0$. Comme précédemment, cela veut dire que $y'=0$, ainsi que $x'\neq0,z'\neq0$. Nous avons alors les équations~$xz=~x'z'$ et $z^2=z'^2$. Cette dernière nous affirme que~$z'=~\pm z$. Avec le même raisonnement que le point ci-dessus, nous avons les implications $z'=-z\Rightarrow x'=-x$ et $z'=z\Rightarrow x'=x$. On en conclut directement que $(x,0,z)\eqSym(x',0,z')$.
    \item Le cas $z=0$ et $x\neq0,y\neq0$ se traite comme le précédent, nous pouvons alors directement conclure que $(x,y,0)\eqSym(x',y',0)$.
\end{itemize}

Il ne reste plus qu'à traiter le cas où aucun terme est nul. Dans ce cas, nous obtenons les équations suivantes : \[\left\{\begin{matrix}
\frac{x}{x'}=\frac{y'}{y}\\ 
\frac{x}{x'}=\frac{z'}{z}\\ 
y^2-z^2=y'^2-z'^2\\ 
\frac{y}{y'}=\frac{z'}{z}.\end{matrix}\right.\]En mettant les équations ensemble, nous obtenons : \[\frac{x}{x'}=\frac{y'}{y}=\frac{z'}{z}=\frac{y}{y'}=\frac{x'}{x}=\frac{z}{z'}.\]Les points $(x,y,z)$ et $(x',y',z')$ appartiennent à la même droite vectorielle d'équation paramétrique $\frac{x''}{x'}=\frac{y''}{y'}=\frac{z''}{z'}$, avec $(x'',y'',z'')$ les variables de l'espace. En effet, ces équations sont vraies pour $(x'',y'',z'')=(x,y,z)$ d'après ce que l'on vient de voir, et vraie pour $(x'',y'',z'')=(x',y',z')$ car tous les facteurs valent 1.

Puisque les points appartient à la même droite vectorielle, ils sont soient égaux, soient antipodaux, ce qui veut dire que $(x,y,z)\eqSym(x',y,z')$.
\end{proof}

Par la propriété universelle du quotient \ref{th:quotient}, on sait alors qu'il existe une unique application injective continue $\overline{p}:\s{2}/\eqSym\to\bb{R}^4$, 
telle que $\overline{p}\circ\pi_p=p$, avec $\pi_p$ la surjection canonique pour le quotient par $\eqSym$.

\bigskip Notre objectif à présent est de montrer qu'il existe une application  inverse à droite de~$\overline{p}$, par recollement d'applications définies partiellement. Pour ce faire, nous allons d'abord montrer que de telles applications peuvent exister.

\begin{lemma}\label{lemma:projective-square}
Soit $(u,v,w,t)\in P$, avec $(x,y,z)\in\s{2}$ tel que~$p(x,y,z)=~(u,v,w,t)$. Alors:\[\begin{split}
    (x^2,y^2,z^2)&=\left(1-w-\frac{vt}{u}, \frac{vt}{2u}+w, \frac{vt}{2u}\right)\quad\text{si }u\neq0\\
    &=\left(1+w-\frac{ut}{v},\frac{ut}{2v},\frac{ut}{2v}-w\right)\quad\text{si }v\neq0\\
    &=\left(\frac{u^2-v^2}{w},\frac{1}{2}\left(1+w-\frac{u^2-v^2}{w}\right),\frac{1}{2}\left(1-w-\frac{u^2-v^2}{w}\right)\right)\quad\text{si }w\neq0\\
    &=\left(\frac{2uv}{t},\frac{1}{2}\left(1+w-\frac{2uv}{t}\right),\frac{1}{2}\left(1-w-\frac{2uv}{t}\right)\right)\quad\text{si }t\neq0.
\end{split}\]
\end{lemma}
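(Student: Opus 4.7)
Le plan est de traiter séparément les quatre cas, en isolant à chaque fois l'un des trois carrés $x^2,y^2,z^2$ au moyen d'un produit bien choisi des équations définissant $p$, puis en récupérant les deux autres grâce à la contrainte sphérique $x^2+y^2+z^2=1$ et à la relation $w=y^2-z^2$.

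Pour le cas $u\neq 0$, je commencerais par observer que $vt=(xz)(2yz)=2xyz^2$, et je diviserais par $u=xy$ pour obtenir directement $z^2=vt/(2u)$. Les expressions $y^2=w+z^2$ et $x^2=1-y^2-z^2$ en découleraient alors immédiatement. Le cas $v\neq 0$ serait entièrement symétrique : l'égalité $ut=2xy^2z$ divisée par $v=xz$ donnerait $y^2=ut/(2v)$, puis $z^2=y^2-w$ et $x^2=1-y^2-z^2$.

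Pour $w\neq 0$, l'observation-clé serait la factorisation $u^2-v^2=x^2(y^2-z^2)=x^2w$, qui isole immédiatement $x^2=(u^2-v^2)/w$ ; la demi-somme et la demi-différence des égalités $y^2+z^2=1-x^2$ et $y^2-z^2=w$ fourniraient alors $y^2$ et $z^2$. Le cas $t\neq 0$ se traiterait de manière analogue, en partant de $2uv=(xy)(2xz)=x^2t$ pour obtenir $x^2=2uv/t$, puis en appliquant la même combinaison somme-différence que ci-dessus.

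Il n'y aurait pas de difficulté conceptuelle : tout repose sur de l'algèbre élémentaire. Le seul point demandant un peu de vigilance serait la vérification, à chaque branche, que le dénominateur utilisé est bien non nul en vertu précisément de l'hypothèse de la branche ($u\neq 0$ assure $xy\neq 0$, $v\neq 0$ assure $xz\neq 0$, etc.). C'est d'ailleurs cette quadruple formulation, aucune n'étant globalement valide sur $P$, qui préfigure le recollement d'applications définies partiellement nécessaire pour construire ultérieurement une inverse à droite de $\overline{p}$.
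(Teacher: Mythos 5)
Votre démarche est correcte et coïncide pour l'essentiel avec la preuve du papier : mêmes produits clés ($vt=2uz^2$, $ut=2vy^2$, $u^2-v^2=x^2w$, $2uv=x^2t$) pour isoler un carré dans chaque branche, puis même recours à l'équation de la sphère et à $w=y^2-z^2$ pour récupérer les deux autres. La remarque sur la non-nullité des dénominateurs, garantie par l'hypothèse de chaque branche, est un ajout bienvenu mais ne change pas l'argument.
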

\begin{proof}
Nous allons traiter chaque cas :
\begin{itemize}
    \item Supposons $u\neq0$.  On peut remarquer que $vt=xz\cdot 2yz=2uz^2$. Dès lors, on obtient bien $z^2=\frac{vt}{2u}$. De plus, on sait que $w=y^2-z^2$, donc on peut en déduire avec le résultat précédent que $y^2=\frac{vt}{2u}+w$. Enfin, avec l'équation de la sphère $x^2+y^2+z^2=1$, on en déduit donc la valeur de $x^2$ : \[x^2=1-y^2-z^2=1-\frac{vt}{2u}-w-\frac{vt}{2u}=1-w-\frac{vt}{u}.\]
    \item Supposons $v\neq0$. On peut remarquer que l'on a $ut=xy\cdot 2yz=2vy^2$. Dès lors, on obtient bien $y^2=\frac{ut}{2v}$. Pour la même raison que pour le point d'avant, on a~$z^2=y^2-w=\frac{ut}{2v}-w$. Et enfin, de même, on a~$x^2=1+w-\frac{ut}{v}$.
    \item Supposons $w\neq 0$. On remarque que l'on a~$u^2-v^2=x^2(y^2-z^2)=x^2w$. Dès lors, on obtient bien $x^2=\frac{u^2-v^2}{w}$. Ensuite, avec l'égalité $y^2=1-x^2-z^2$, si l'on ajoute $y^2$, on obtient : \[2y^2=1-x^2+y^2-z^2=1-\frac{u^2-v^2}{w}+w.\]Nous pouvons alors en déduire que l'on a $y^2=\frac{1}{2}\left(1+w-\frac{u^2-v^2}{w}\right)$. Enfin, avec le même raisonnement, on a : \[2z^2=1-x^2-y^2+z^2=1-w-\frac{u^2-v^2}{w}.\]Nous pouvons alors en déduire que l'on a bien $z^2=\frac{1}{2}\left(1-w-\frac{u^2-v^2}{w}\right)$.
    \item Supposons enfin $t\neq 0$. En remarquant $2uv=2xy\cdot xz=x^2t$, on obtient bien $x^2=\frac{2uv}{t}$. Dès lors, nous pouvons en déduire les valeurs de~$y^2$ et de $z^2$ de la même manière que précédemment, en changeant la valeur de~$x$ correspondante. On obtient donc bien $y^2=\frac{1}{2}\left(1+w-\frac{2uv}{t}\right)$, et également~$z^2=\frac{1}{2}\left(1-w-\frac{2uv}{t}\right)$.
\end{itemize}
\end{proof}

De cette manière, nous pouvons définir les applications suivantes. Le domaine de définition de $g_0$ est $\{(0,0,0,0)\}$ ; et celui de $g_i$, pour $i\in\{u,v,w,t\}$, est $P$ privé des points où $i=0$.
\[\left\{\begin{matrix}
g_0:(0,0,0,0)\mapsto(1,0,0)\\
g_u:(u,v,w,t)\mapsto\left(\sqrt{1-w-\frac{vt}{u}} ,\varepsilon(u)\sqrt{\frac{vt}{2u}+w},\varepsilon(v)\sqrt{\frac{vt}{2u}} \right)\\ 
g_v:(u,v,w,t)\mapsto\left(\sqrt{1+w-\frac{ut}{v}} ,\varepsilon(u)\sqrt{\frac{ut}{2v}},\varepsilon(v)\sqrt{\frac{ut}{2v}-w} \right)\\ 
g_w:(u,v,w,t)\mapsto\left(\sqrt{\frac{u^2-v^2}{w}},\varepsilon(u)\sqrt{\frac{1}{2}\left(1+w-\frac{u^2-v^2}{w}\right)},\varepsilon(v)\sqrt{\frac{1}{2}\left(1-w-\frac{u^2-v^2}{w}\right)}\right)\\ 
g_t:(u,v,w,t)\mapsto\left( \varepsilon(v)\sqrt{\frac{2uv}{t}},\varepsilon(t)\sqrt{\frac{1}{2}\left(1+w-\frac{2uv}{t}\right)},\sqrt{\frac{1}{2}\left(1-w-\frac{2uv}{t}\right)} \right).
\end{matrix}\right.\]

\begin{remark}
Soit $(u,v,w,t)\in P$, avec $(x,y,z)\in\s{2}$ un point vérifiant l'égalité~$p(x,y,z)=(u,v,w,t)$. Alors on a :\begin{equation}\label{eq:projective-signe}
\left\{\begin{matrix}
\varepsilon(x)\varepsilon(y)=\varepsilon(u)\\
\varepsilon(x)\varepsilon(z)=\varepsilon(v)\\
\varepsilon(u)\varepsilon(v)=\varepsilon(y)\varepsilon(z)=\varepsilon(t).
\end{matrix}\right.
\end{equation}
\end{remark}

\begin{lemma}\label{lemma:projective-inverses}
Les applications définie précédemment vérifient : \[p\circ g_i=id_{P_i},\] avec $P_i:=\{(u,v,w,t),i\neq0\}$ pour $i\in\{u,v,w,t\}$, et $P_0=\{(0,0,0,0)\}$.
\end{lemma}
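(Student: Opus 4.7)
L'idée est de vérifier l'identité $p\circ g_i = \mathrm{id}_{P_i}$ séparément pour chacun des cinq cas $i\in\{0,u,v,w,t\}$. Le cas $i=0$ est immédiat : un calcul direct donne $p(1,0,0)=(0,0,0,0)$. Pour chacun des quatre autres cas, je fixerai un point $(u,v,w,t)\in P_i$ et je me donnerai un antécédent $(x,y,z)\in\s{2}$ tel que $p(x,y,z)=(u,v,w,t)$ (cet antécédent existe par définition de $P$).

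L'ingrédient clé est le lemme \ref{lemma:projective-square} : il identifie les trois expressions apparaissant sous les radicaux dans la définition de $g_i$ respectivement à $x^2$, $y^2$, $z^2$. Ces quantités sont donc positives, les racines carrées sont bien définies, et elles valent $|x|$, $|y|$, $|z|$. Ainsi, en posant $(x',y',z') := g_i(u,v,w,t)$, on obtient $|x'|=|x|$, $|y'|=|y|$, $|z'|=|z|$, et il ne reste qu'à comparer les signes.

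Il s'agit ensuite de vérifier les quatre équations définissant $p(x',y',z')=(u,v,w,t)$. L'égalité $y'^2-z'^2=w$ découle directement de $y^2-z^2=w$. Pour les trois monômes restants, je compare valeur absolue et signe séparément : les modules concordent car $|x||y|=|u|$, $|x||z|=|v|$ et $2|y||z|=|t|$ ; quant aux signes, les multiplicateurs $\varepsilon(u),\varepsilon(v),\varepsilon(t)$ introduits dans la définition de $g_i$ sont précisément choisis pour que $\varepsilon(x'y')$, $\varepsilon(x'z')$ et $\varepsilon(2y'z')$ reproduisent $\varepsilon(u)$, $\varepsilon(v)$ et $\varepsilon(t)$, en s'appuyant sur les identités \eqref{eq:projective-signe} de la remarque qui précède.

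L'obstacle principal n'est pas conceptuel mais bureaucratique : il faut dérouler la vérification composante par composante dans les quatre sous-cas $g_u,g_v,g_w,g_t$, et prendre soin des situations-frontière où l'une des composantes de $(x,y,z)$ s'annule — auquel cas la convention $\varepsilon(0)=1$ s'applique et la composante correspondante de $p(x',y',z')$ s'annule également, rendant l'égalité triviale. Une fois ces vérifications faites, on obtient bien $p\circ g_i=\mathrm{id}_{P_i}$ pour chaque $i$.
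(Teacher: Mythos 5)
Votre démonstration est correcte et suit essentiellement la même démarche que celle de l'article : utiliser le lemme \ref{lemma:projective-square} pour identifier les radicandes à $x^2$, $y^2$, $z^2$, écrire $g_i(u,v,w,t)$ sous la forme $(\pm|x|,\pm|y|,\pm|z|)$, puis retrouver les signes de $u$, $v$, $t$ via les identités \eqref{eq:projective-signe} de la remarque. Votre traitement explicite des cas-frontière où une composante s'annule (convention $\varepsilon(0)=1$) est même un peu plus soigneux que celui du texte.
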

\begin{proof}
Soient $(u,v,w,t)\in P$ et $(x,y,z)\in\s{2}$ tels que l'on ait l'égalité~$p(x,y,z)=(u,v,w,t)$.\begin{itemize}
    \item Supposons que $(u,v,w,t)\in P_0$, c'est à dire $(u,v,w,t)=(0,0,0,0)$. On peut vérifier que la composée des deux applications donne l'identité : \[\begin{split}
        p\circ g_0(0,0,0,0)&=p(1,0,0)\\
        &=(1\cdot0,1\cdot0,0^2-0^2,0\cdot0)\\
        &=(0,0,0,0).
    \end{split}\]
    \item Supposons $(u,v,w,t)\in P_u$, de tel sorte on peut étudier $g_u$. Avec le lemme \ref{lemma:projective-square}, ainsi que la remarque, on a : \[g_u(u,v,w,t)=\big(|x|,\,\varepsilon(u)|y|,\,\varepsilon(v)|z|\big),\]et en composant avec $p$, on aboutit à : \[\begin{split}
    p\circ g_u(u,v,w,t)&=p\big(|x|,\,\varepsilon(u)|y|,\,\varepsilon(v)|z|\big)\\
    &=\big(\varepsilon(u)|xy|,\,\varepsilon(v)|xz|,\, y^2-z^2,\,\varepsilon(t)2|yz|\big)\\
    &=(u,v,w,t).
    \end{split}\]
    \item Supposons $(u,v,w,t)\in P_v$, de telle sorte que l'on étudie $p\circ g_v$. De même qu'auparavant, on retrouve alors, en faisant la composée : \[p\circ g_v(u,v,w,t)=p\big(|x|,\,\varepsilon(u)|y|,\,\varepsilon(v)|z|\big)=(u,v,w,t).\]
    \item Supposons $(u,v,w,t)\in P_w$. De manière analogue que les cas précédents, nous aboutissons au résultat suivant : 
    \[p\circ g_w(u,v,w,t)=p\big(|x|,\,\varepsilon(u)|y|,\,\varepsilon(v)|z|\big)=(u,v,w,t).\]
    \item Supposons enfin $(u,v,w,t)\in P_t$. Ainsi, tout comme les cas précédents, on obtient le résultat suivant : \[\begin{split}
    p\circ g_t(u,v,w,t)&=p\big(\varepsilon(v)|x|,\,\varepsilon(t)|y|,\,|z|\big)\\
    &=\big(\varepsilon(u)|xy|,\,\varepsilon(v)|xz|,\, y^2-z^2,\,\varepsilon(t)2|yz|\big)\\
    &=(u,v,w,t).
    \end{split}\]
\end{itemize}
Nous avons ainsi montré que toutes les applications sont des inverses à droites de $p$.
\end{proof}

\begin{proposition}
Avec $(u,v,w,t)\in P$, nous avons les deux équations suivantes : \begin{equation}\label{eq:projective}
\left\{\begin{matrix}
2uvw=(u^2-v^2)t\\ 
(vt+2uw)(1-w)=u(t^2+2u^2).
\end{matrix}\right.
\end{equation}
\end{proposition}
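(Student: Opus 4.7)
Mon plan consisterait en une vérification algébrique directe. Pour $(u,v,w,t) \in P$, je commencerais par fixer un représentant $(x,y,z) \in \s{2}$ tel que $p(x,y,z) = (u,v,w,t)$, c'est-à-dire $u = xy$, $v = xz$, $w = y^2 - z^2$ et $t = 2yz$, avec la contrainte $x^2 + y^2 + z^2 = 1$. Il suffit alors de substituer ces expressions dans chacune des deux équations annoncées et de vérifier qu'elles se réduisent à des identités polynomiales en $x,y,z$, en utilisant la relation de la sphère lorsque cela s'avère nécessaire. L'avantage d'une telle approche est qu'elle ne dépend pas du choix du représentant, puisque toutes les quantités en jeu sont invariantes par l'antipodie $(x,y,z) \mapsto (-x,-y,-z)$ d'après la proposition~\ref{prop:projective-relation}.

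Pour la première équation, le calcul sera immédiat et n'utilisera même pas l'appartenance à la sphère : le membre de gauche se factorisera en $2uvw = 2(xy)(xz)(y^2-z^2) = 2x^2 y z (y^2 - z^2)$, tandis que le membre de droite donnera $(u^2 - v^2) t = (x^2 y^2 - x^2 z^2)(2yz) = 2 x^2 y z (y^2 - z^2)$. L'identité est ainsi purement formelle au niveau des monômes.

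Pour la seconde équation, l'étape clef sera de simplifier préalablement le facteur $vt + 2uw$. En développant, on obtiendra $vt + 2uw = 2 x y z^2 + 2 x y (y^2 - z^2) = 2 x y^3$, grâce à la compensation miraculeuse des termes en $xyz^2$. J'utiliserais ensuite la relation de la sphère pour réécrire $1 - w = 1 - y^2 + z^2 = x^2 + 2 z^2$, ce qui transformerait le membre de gauche en $2 x y^3 (x^2 + 2 z^2)$. Du côté droit, un calcul direct fournira $u(t^2 + 2u^2) = xy(4 y^2 z^2 + 2 x^2 y^2) = 2 x y^3 (2 z^2 + x^2)$, et l'égalité sera établie.

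Je n'anticipe pas de véritable obstacle : l'unique observation non triviale réside dans la simplification de $vt + 2 u w$ en le seul monôme $2 x y^3$, qui éclaire la forme compacte de la seconde équation. Tout le reste relève de manipulations algébriques de routine, éventuellement combinées à la relation $x^2 + y^2 + z^2 = 1$. Ces deux équations jouent vraisemblablement un rôle dans la suite de l'article pour recoller les inverses partielles $g_u, g_v, g_w, g_t$ introduites ci-avant, ce qui explique leur intérêt algébrique malgré leur démonstration immédiate.
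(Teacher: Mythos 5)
Votre démonstration est correcte et suit essentiellement la même démarche que celle de l'article : substitution directe de $u=xy$, $v=xz$, $w=y^2-z^2$, $t=2yz$ et vérification algébrique, la première équation étant une identité de monômes et la seconde reposant sur la simplification $vt+2uw=2xy^3$ combinée à la relation de la sphère. La seule différence est cosmétique : vous utilisez $1-w=x^2+2z^2$ en amont, là où l'article factorise la différence des deux membres pour faire apparaître $2xy^3(1-x^2-y^2-z^2)=0$.
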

Nous conjecturons le fait que $P$ est l'ensemble des solutions de ces équations.
\begin{proof}
Soit $(u,v,w,t)$ un élément de $P$, tel qu'il soit image par $p$ du point $(x,y,z)\in\s{2}$ : c'est à dire qu'il vérifie $p(x,y,z)=(u,v,w,t)$.

Pour la première équation, nous avons : \[\begin{split}
    2uvw-(u^2-v^2)t&=2(xy)(xz)(y^2-z^2)-\big((xy)^2-(xz)^2\big)(2yz)\\
    &=2x^2yz(y^2-z^2)-x^2(y^2-z^2)2yz\\
    &=0.
\end{split}\]Ensuite, pour la seconde, nous avons : \[\begin{split}
    (vt+2uw)(1-w)-u(t^2+2u^2)&=\big(2xyz^2+2xy(y^2-z^2)\big)(1-y^2+z^2)-xy\big(4y^2z^2+2x^2y^2\big)\\
    &=2xy\big(y^2(1-y^2+z^2)-y^2(2z^2+x^2)\big)\\
    &=2xy^3(1-y^2-z^2-x^2)\\
    &=0.
\end{split}\]Ce qui nous permet de prouver que les équations \eqref{eq:projective} sont bien vérifiées ici.
\end{proof}

On définit désormais l'application $g_p$, tel que sa restriction de chacun des domaines de définition donne les autres : \[\begin{split}
    g_p:P&\longrightarrow S\\
(u,v,w,t)&\mapsto\left\{\begin{matrix}
\pi_p\circ g_0(u,v,w,t) &\text{si }(u,v,w,t)=(0,0,0,0) \\ 
\pi_p\circ g_u(u,v,w,t) &\text{si }u\neq 0 \\ 
\pi_p\circ g_v(u,v,w,t) &\text{si }v\neq 0 \\ 
\pi_p\circ g_w(u,v,w,t) &\text{si }w\neq 0 \\ 
\pi_p\circ g_t(u,v,w,t) &\text{si }t\neq 0.
\end{matrix}\right.
\end{split}\]

\begin{lemma}\label{lemma:projective-inv-define}
L'application $g_p$ est bien définie. De plus, elle est une inverse à droite de $\overline{p}$.
\end{lemma}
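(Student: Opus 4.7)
Ma stratégie est de vérifier d'abord que $g_p$ est bien définie---ce qui demande de traiter deux choses : la couverture de $P$ par les cinq cas, et la compatibilité entre les cas chevauchants---puis d'en déduire quasi-immédiatement la propriété d'inverse à droite.

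Pour la couverture, je remarquerais que si $(u,v,w,t)=(0,0,0,0)$ alors seul le premier cas s'applique ; sinon, au moins l'un des $u,v,w,t$ est non nul, et l'un des cas $g_u,g_v,g_w,g_t$ convient alors. Il n'y a donc pas de point orphelin, et le premier cas n'entre en conflit avec aucun autre.

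Pour la bonne définition en cas de chevauchement, je considérerais $(u,v,w,t)\in P\setminus\{(0,0,0,0)\}$ pour lequel deux composantes d'indices $i,j\in\{u,v,w,t\}$ sont simultanément non nulles, et je montrerais que $\pi_p\big(g_i(u,v,w,t)\big)=\pi_p\big(g_j(u,v,w,t)\big)$. L'argument clé est le suivant : par le lemme \ref{lemma:projective-inverses}, $g_i(u,v,w,t)$ et $g_j(u,v,w,t)$ sont deux antécédents du même point $(u,v,w,t)$ par $p$ ; la proposition \ref{prop:projective-relation} donne alors $g_i(u,v,w,t)\eqSym g_j(u,v,w,t)$, ce qui garantit l'égalité des classes dans~$S$. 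Les cinq formules définissent donc une seule et même valeur en chaque point où plusieurs d'entre elles sont simultanément disponibles.

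Pour la propriété d'inverse à droite, il suffit ensuite, dans chaque cas $i$ de la définition, d'utiliser la factorisation $\overline{p}\circ\pi_p=p$ et le lemme \ref{lemma:projective-inverses} : $\overline{p}\circ g_p(u,v,w,t)=\overline{p}\circ\pi_p\circ g_i(u,v,w,t)=p\circ g_i(u,v,w,t)=(u,v,w,t)$, ce qui donne $\overline{p}\circ g_p=id_P$. La seule étape non mécanique est l'argument de compatibilité, mais celui-ci repose entièrement sur les deux résultats préalables qui portent déjà tout le travail calculatoire ; je ne prévois donc pas de réelle difficulté dans cette preuve.
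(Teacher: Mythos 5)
Votre démonstration est correcte et suit essentiellement la même démarche que celle de l'article : dans les deux cas, la compatibilité sur les recouvrements vient du fait que $g_i$ et $g_j$ fournissent deux antécédents du même point par $p$, l'article concluant via l'injectivité de $\overline{p}$ là où vous invoquez directement la proposition \ref{prop:projective-relation} — deux formulations du même argument, et la propriété d'inverse à droite s'en déduit identiquement par la factorisation $p=\overline{p}\circ\pi_p$.
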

\begin{proof}
Soit $i,j\in\{u,v,w,t\}$ tels que $i\neq j$. Supposons $ij\neq 0$. Nous avons montré dans le lemme \ref{lemma:projective-inverses} que~$p\circ g_i=p\circ g_j=id_{P_i\cap P_j}$. Or $p=\overline{p}\circ\pi_p$, ce qui nous donne : \begin{equation}\label{eq:projective-inverse}
\overline{p}\circ\pi_p\circ g_i=\overline{p}\circ\pi_p\circ g_j.
\end{equation}Par injectivité de $\overline{p}$, nous pouvons en déduire que $\pi_p\circ g_i=\pi_p\circ g_j$. De la même manière, nous pouvons montrer que $\pi_p\circ g_0=\pi_p\circ g_i$. Les intersections $P_i\cap P_j$, avec $i,j\in\{0,u,v,w,t\}$ et $i\neq j$, recouvrent l'espace $P$ en entier, donc nous avons traité tout les cas.

\bigskip De plus, du fait que les $g_i$ soient des inverses à droite de $p$, l'équation \eqref{eq:projective-inverse} vaut l'identité sur le domaine de définition de $g_i$.

Cela nous permet de dire que $g_p$ est bien définie sur $P$, comme étant inverse à droite de $\overline{p}$.
\end{proof}

Il reste à démontrer la continuité de $g_p$, que nous admettons.

\begin{theorem}
L'espace $S$ muni de la topologie quotient est homéomorphe au sous-espace $P$ de $\bb{R}^4$.
\end{theorem}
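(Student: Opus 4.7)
Le plan consiste à reproduire la stratégie employée pour les théorèmes \ref{th:mobius-homeo} et \ref{th:torus-homeo} : tout le travail technique a déjà été réalisé en amont, et il ne reste qu'à assembler les briques disponibles.

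Plus précisément, je combinerais d'abord la proposition \ref{prop:projective-relation} et la propriété universelle du quotient (théorème \ref{th:quotient}) pour obtenir que l'unique application $\overline{p}:S\to P$ vérifiant $\overline{p}\circ\pi_p=p$ est injective et continue. Le lemme \ref{lemma:projective-inv-define} fournit ensuite une application $g_p:P\to S$, inverse à droite de $\overline{p}$, dont la continuité est admise dans le texte. La proposition \ref{prop:inj+inv=bij} permet alors de conclure que $\overline{p}$ est bijective et que $g_p$ en est la réciproque. Comme $\overline{p}$ et $g_p$ sont toutes deux continues, $\overline{p}$ est un homéomorphisme entre $S$ et $P$, ce qui achève la démonstration.

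La véritable difficulté, explicitement admise par l'auteur, réside dans la continuité de $g_p$. Une preuve rigoureuse exigerait d'abord la continuité de chaque restriction $\pi_p\circ g_i$ sur son domaine $P_i$, ce qui découle des formules explicites par composition d'applications continues là où les dénominateurs ne s'annulent pas. Il faudrait ensuite invoquer l'identité $\pi_p\circ g_i=\pi_p\circ g_j$ sur $P_i\cap P_j$ (déjà obtenue dans la preuve du lemme \ref{lemma:projective-inv-define}) pour recoller ces restrictions en une application continue sur $P\setminus\{(0,0,0,0)\}$. Resterait enfin à traiter la continuité au point singulier $(0,0,0,0)$, image par $p$ des pôles $(0,0,\pm1)$ de $\s{2}$, par un argument asymptotique analogue à ceux développés dans la proposition \ref{homeo-S-CP} : il s'agirait de montrer que $\pi_p\circ g_i(u,v,w,t)$ tend vers la classe $[0,0,\pm1]$ lorsque $(u,v,w,t)\to(0,0,0,0)$, et c'est là que je m'attends à devoir effectuer le plus de calculs.
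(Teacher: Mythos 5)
Votre démonstration suit exactement la même démarche que celle de l'article : injectivité et continuité de $\overline{p}$ obtenues en combinant la proposition \ref{prop:projective-relation} et le théorème \ref{th:quotient}, inverse à droite $g_p$ fournie par le lemme \ref{lemma:projective-inv-define}, conclusion via la proposition \ref{prop:inj+inv=bij}, la continuité de $g_p$ restant admise dans les deux cas. Une seule imprécision dans votre esquisse du point admis : $(0,0,0,0)$ n'est pas l'image par $p$ des pôles $(0,0,\pm1)$ de $\s{2}$ (leur image est $(0,0,-1,0)$), mais celle des points $(\pm1,0,0)$ ; la limite à établir en $(0,0,0,0)$ est donc la classe $[1,0,0]$, comme l'indique d'ailleurs l'esquisse de l'article, et non $[0,0,\pm1]$.
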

\begin{proof}
Nous avons montré dans la proposition \ref{prop:projective-relation} que l'application $\overline{p}$ est injective et continue. Dans le lemme \ref{lemma:projective-inv-define}, nous avons montré que $g_p$ était l'inverse à droite de $\overline{p}$. Finalement, avec la proposition \ref{prop:inj+inv=bij}, nous pouvons en déduire que $\overline{p}$ est une application bijective et continue. Il reste à montrer que l'inverse $g_p$ est bien continue. Par manque de temps, nous avons pas pu nous en occuper, et nous l'admettons. Voici les différents cas que l'on doit traiter pour répondre à ce problème :\begin{itemize}
    \item Nous devons montrer dans un premier temps que l'on a : \[\left \| g_p(h_u,h_v,h_w,h_t) -[1,0,0]\right \|\underset{\left \| (h_u,h_v,h_w,h_t) \right \|\rightarrow\, 0}{\longrightarrow}0,\]
    avec un cas par coordonnée.
    \item Par la suite, nous devons supposer une composante nulle, disons $t$ par exemple, et regarder $g(u_0,v_0,w_0,0)$. Nous devons traiter chacun des cas où $u_0\neq0$, puis le cas $v_0\neq0$, et $w_0\neq0$.
    \item On réitère le procédé avec $w=0$, puis $v=0$, et enfin $u=0$.
\end{itemize} Cela fait un total de 16 cas en tout à traiter.

\bigskip En admettant que l'application $\pi\circ g_p$ soit bien continue, nous pouvons alors en déduire que $\overline{p}$ réalise un homéomorphisme entre la sphère quotientée $S$ et la surface paramétrée $P$.
\end{proof}

Par transitivité de l'homéomorphisme, on peut également en conclure que l'espace quotient $C_P$ est homéomorphe au sous espace $P$ de $\bb{R}^4$.

\newpage
\appendix
\section{Plan projectif réel : homéomorphisme avec les droites de l'espace}

Dans cette partie annexe, nous allons montrer que l'espace quotient $C_P$, introduit dans la partie du plan projectif réel, est homéomorphe à l'ensemble des droites vectorielle de $\bb{R}^3$.

Dans la suite, sauf mention contraire, nous noterons $\{0\}$ le singleton contenant l'élément $(0,0,0)\in\mathbb{R}^3$.

\begin{definition}
Dans un ensemble conique $K$, on appelle \emph{relation de colinéarité} la relation d'équivalence définie par : \[\forall x,x'\in K,\quad x\eqCol x'\Leftrightarrow\exists\lambda\neq0,\ x'=\lambda x.\]

La relation de symétrie est la restriction de la relation de colinéarité à la sphère unité, car deux points sont symétriques lorsque $\lambda=\pm1$. Formalement c'est ce que nous allons montré dans la proposition \ref{homeo-RP-S}. Autrement dit, la relation d'équivalence est définie par : \[\forall x,x'\in K,\quad x\eqSym x'\Leftrightarrow x'=\pm x.\]
\end{definition}

Il en vient ainsi la définition de l'espace suivant :

\begin{definition}
On note $\mathbb{R}\mathrm{P}^2$ l'ensemble des droites vectorielles de $\mathbb{R}^3$ passant par l'origine. Cet espace est obtenu par le quotient de l'espace $\mathbb{R}^3\setminus\{0\}$ par la relation de colinéarité. Autrement dit, $\mathbb{R}\mathrm{P}^2=(\mathbb{R}^3\!\setminus\!\{0\})/\eqCol$, et est muni de la topologie quotient. Nous noterons $(x:y:z)$ les éléments de l'espace $\bb{R}\mathrm{P}^2$, avec~$(x,y,z)\in\mathbb{R}^3\setminus\{0\}$ un représentant de cette classe.
\end{definition}

Nous allons montrer que l'espace quotient $\bb{R}\mathrm{P}^2$ est homéomorphe à la demi-sphère quotienté par la relation de colinéarité $S$.

\subsection{Homéomorphismes}

\begin{proposition}\label{homeo-RP-S}
Les espaces quotients $\mathbb{R}\mathrm{P}^2$ et $S$ sont homéomorphes.
\end{proposition}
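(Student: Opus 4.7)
Le plan est de construire deux applications continues réciproques entre $\bb{R}\mathrm{P}^2$ et $S$, à savoir la normalisation $v\mapsto v/\|v\|_2$ dans un sens, et l'inclusion $\s{2}\hookrightarrow\bb{R}^3\setminus\{0\}$ dans l'autre, puis d'appliquer deux fois la propriété universelle du quotient comme dans la stratégie générale annoncée en section 1.2.

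Dans un premier temps, je noterais $\pi_S:\s{2}\to S$ et $\pi_{RP}:\bb{R}^3\setminus\{0\}\to\bb{R}\mathrm{P}^2$ les surjections canoniques, et je définirais
\[f:\bb{R}^3\setminus\{0\}\longrightarrow S,\qquad (x,y,z)\mapsto\pi_S\!\left(\frac{(x,y,z)}{\|(x,y,z)\|_2}\right).\]
Cette application est continue comme composée d'applications continues sur $\bb{R}^3\setminus\{0\}$. Pour la compatibilité avec $\eqCol$ : si $(x',y',z')=\lambda(x,y,z)$ avec $\lambda\neq 0$, alors $(x',y',z')/\|(x',y',z')\|_2=\varepsilon(\lambda)\,(x,y,z)/\|(x,y,z)\|_2$, qui est antipodal au second membre et donc lui est identifié dans $S$. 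Le théorème \ref{th:quotient} fournit l'application continue $\overline{f}:\bb{R}\mathrm{P}^2\to S$. Pour appliquer le second volet du même théorème et obtenir l'injectivité de $\overline{f}$, je vérifierais la réciproque : si $f(v)=f(v')$, alors $v/\|v\|_2=\pm\, v'/\|v'\|_2$, soit $v'=\lambda v$ avec $\lambda=\pm\|v'\|_2/\|v\|_2\neq 0$, donc $v\eqCol v'$. Symétriquement, je définirais $g:\s{2}\to\bb{R}\mathrm{P}^2$ par $g(x,y,z)=\pi_{RP}(x,y,z)=(x:y:z)$, continue par construction et compatible avec $\eqSym$ puisque $(x',y',z')=\pm(x,y,z)$ avec $\pm 1\neq 0$ entraîne $(x:y:z)=(x':y':z')$ ; d'où une application continue $\overline{g}:S\to\bb{R}\mathrm{P}^2$.

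Il resterait à vérifier que $\overline{f}$ et $\overline{g}$ sont réciproques, par calcul direct sur des représentants : pour $(x,y,z)\in\s{2}$, comme $\|(x,y,z)\|_2=1$, on a $\overline{f}\circ\overline{g}([x,y,z])=[x,y,z]$ ; et pour $(x,y,z)\in\bb{R}^3\setminus\{0\}$, le triplet $(x,y,z)/\|(x,y,z)\|_2$ est colinéaire à $(x,y,z)$, donc $\overline{g}\circ\overline{f}((x:y:z))=(x:y:z)$. Contrairement aux surfaces des sections précédentes, aucune étape ne présente ici de réelle difficulté : il n'y a ni problème de continuité aux bords (comme avec $\atan2$) ni recollement d'applications partielles à mener. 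Le seul point à surveiller est la cohérence des définitions : l'application $v\mapsto v/\|v\|_2$ n'est bien définie que sur $\bb{R}^3\setminus\{0\}$, ce qui est précisément le domaine retenu pour le quotient définissant $\bb{R}\mathrm{P}^2$.
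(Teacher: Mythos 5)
Votre proposition est correcte et suit essentiellement la même démarche que la preuve de l'article : normalisation $v\mapsto v/\|v\|_2$ passée au quotient dans un sens, inclusion $\s{2}\hookrightarrow\bb{R}^3\setminus\{0\}$ suivie de $\pi_{RP}$ dans l'autre, puis vérification que $\overline{f}$ et $\overline{g}$ sont réciproques. La seule différence est que vous vérifiez en plus l'injectivité de $\overline{f}$ via le second volet du théorème \ref{th:quotient}, étape redondante dès lors que l'on établit que les deux applications sont inverses l'une de l'autre.
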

\begin{proof}
Montrons qu'il existe deux applications, réciproques l'une de l'autre, entre les deux espaces.

On considère dans un premier temps l'application définie par : \[\begin{split}
    f:\mathbb{R}^3\!\setminus\!\{0\}&\longrightarrow S\\
    (x,y,z)&\mapsto\left[\frac{1}{\sqrt{x^2+y^2+z^2}}(x,y,z)\right].
\end{split}\]

Cette application est continue. Montrons que cette application est compatible pour la relation de colinéarité.

Soit $(x,y,z)\in\mathbb{R}^3\!\setminus\!\{0\}$ et soit $\lambda\in\mathbb{R}$. On~a :\[\begin{split}
f(\lambda x,\lambda y,\lambda z)&=\left[\frac{1}{\sqrt{(\lambda x)^2+(\lambda y)^2+(\lambda z)^2}}(\lambda x,\lambda y,\lambda z)\right]\\
&=\left[\frac{\lambda}{|\lambda|\sqrt{x^2+y^2+z^2}}(x,y, z)\right]\\
&=\left[\pm\frac{1}{\sqrt{x^2+y^2+z^2}}(x,y,z)\right]\\
&=\left[\frac{1}{\sqrt{x^2+y^2+z^2}}(x,y,z)\right]\\
&=f(x,y,z).
\end{split}\]On en déduit alors que $f$ est compatible avec la relation de colinéarité, ce qui nous permet de passer au quotient grâce au théorème \ref{th:quotient}. Il existe alors une unique application continue définie par : \[\begin{split}
\overline{f}:\mathbb{R}\mathrm{P}^2&\longrightarrow S\\
(x:y:z)&\mapsto\left[\frac{1}{{\sqrt{x^2+y^2+z^2}}}\left(x,y,z\right)\right].
\end{split}\]

Cherchons désormais à construire une application réciproque. Nous considérons l'application définie par : \[\begin{split}
g:\s{2}&\longrightarrow\mathbb{R}\mathrm{P}^2\\
(x,y,z)&\mapsto(x:y:z).
\end{split}\]

Il n'y a pas de problème concernant la relation de symétrie, étant donné que c'est un cas particulier de la colinéarité. On en déduit alors directement que $g(x,y,z)=g(-x,-y,-z)$, ce qui nous permet de dire que l'application est compatible avec la relation de symétrie. De plus, cette application est continue. Ainsi, d'après le même théorème \ref{th:quotient}, il existe une unique application continue~$\overline{g}$ définie par : \[\begin{split}
\overline{g}:S&\longrightarrow\mathbb{R}\mathrm{P}^2\\
[x,y,z]&\mapsto (x:y:z).
\end{split}\]

Montrons alors que les applications $\overline{f}$ et $\overline{g}$ sont réciproques l'une de l'autre. Pour $\overline{f}\circ \overline{g}=id_{S}$, c'est évident. Soit $(x:y:z)\in\mathbb{R}\mathrm{P}^2$. On a \[\begin{split}
\overline{g}\circ\overline{f}\big((x:y:z)\big)&=g\left(\left[\frac{1}{\sqrt{x^2+y^2+z^2}}(x,y,z)\right]\right)\\
&=\left(\frac{1}{\sqrt{x^2+y^2+z^2}}(x:y:z)\right)\\
&=(x:y:z).
\end{split}\]On en conclut alors que les deux applications $\overline{f}$ et $\overline{g}$ sont réciproques l'une de l'autre et continues, ce qui fait de $\mathbb{R}\mathrm{P}^2$ et $S$ deux espaces homéomorphes.
\end{proof}

\begin{theorem}\label{th:RP2-homeo}
Les espaces quotients $\mathbb{R}\mathrm{P}^2$ et $C_P$ sont homéomorphes.
\end{theorem}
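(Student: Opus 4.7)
La stratégie naturelle pour cette démonstration est d'invoquer la transitivité de la relation d'homéomorphisme. En effet, la Proposition \ref{homeo-S-CP} a établi un homéomorphisme explicite entre $C_P$ et la demi-sphère $S$, tandis que la Proposition \ref{homeo-RP-S} en fournit un entre $\mathbb{R}\mathrm{P}^2$ et ce même espace $S$. Le résultat souhaité en découle alors par simple composition des deux homéomorphismes déjà construits.

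Concrètement, je noterais $\overline{f}_{CP}:C_P\to S$ et $\overline{g}_{CP}:S\to C_P$ les deux applications réciproques et continues obtenues dans la Proposition \ref{homeo-S-CP}, et je renommerais $\overline{f}_{RP}:\mathbb{R}\mathrm{P}^2\to S$ et $\overline{g}_{RP}:S\to\mathbb{R}\mathrm{P}^2$ les deux applications réciproques et continues obtenues dans la Proposition \ref{homeo-RP-S} (ce renommage évitant le conflit avec les notations $\overline{f},\overline{g}$ déjà utilisées plus haut). Je proposerais alors de considérer l'application composée $\overline{g}_{RP}\circ\overline{f}_{CP}:C_P\to\mathbb{R}\mathrm{P}^2$, qui est continue en tant que composée de deux applications continues. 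Sa réciproque candidate est $\overline{g}_{CP}\circ\overline{f}_{RP}:\mathbb{R}\mathrm{P}^2\to C_P$, elle aussi continue pour la même raison.

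Il resterait alors uniquement à vérifier, par associativité de la composition, que ces deux applications sont effectivement inverses l'une de l'autre. Cela résulte immédiatement des quatre identités $\overline{g}_{CP}\circ\overline{f}_{CP}=id_{C_P}$, $\overline{f}_{CP}\circ\overline{g}_{CP}=id_S$, $\overline{g}_{RP}\circ\overline{f}_{RP}=id_{\mathbb{R}\mathrm{P}^2}$ et $\overline{f}_{RP}\circ\overline{g}_{RP}=id_S$ déjà acquises dans les deux propositions citées, en intercalant au milieu de chaque composée triple la relation pertinente pour collapser les deux maillons centraux.

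Aucune étape ne présente d'obstacle véritable : la preuve est entièrement formelle et se ramène à l'observation que l'homéomorphisme est une relation transitive, argument tout à fait analogue à celui employé dans la remarque qui suit l'énoncé pour conclure que $C_P$ est homéomorphe au sous-espace $P$ de $\bb{R}^4$. L'essentiel du travail conceptuel a été accompli dans les deux propositions préparatoires, où il fallait notamment contrôler la continuité au pôle de la demi-sphère pour la Proposition \ref{homeo-S-CP}, et vérifier la compatibilité soignée des applications avec les relations $\eqCol$ et $\eqSym$ pour la Proposition \ref{homeo-RP-S}.
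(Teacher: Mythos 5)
Votre démonstration est correcte et suit exactement la même stratégie que celle du papier : la transitivité de l'homéomorphisme appliquée aux deux homéomorphismes établis dans les propositions \ref{homeo-S-CP} et \ref{homeo-RP-S}. Vous explicitez même un peu plus que le papier la composition des applications réciproques, ce qui ne change rien au fond de l'argument.
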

\begin{proof}
Avec le lemme \ref{homeo-RP-S}, nous avons montré que $\bb{R}\mathrm{P}^2$ était homéomorphe à la sphère quotientée $S$. Avec le lemme \ref{homeo-S-CP}, nous avons montré que cette même sphère était homéomorphe à l'espace $C_P$. Par la transitivité de l'homéomorphisme, nous pouvons en conclure que les quotients topologiques $\mathbb{R}\mathrm{P}^2$ et $C_P$ sont homéomorphes.
\end{proof}

\end{document}